\newtheorem{Theorem}{Theorem}[section]
\newtheorem{Proposition}[Theorem]{Proposition}
\newtheorem{Lemma}[Theorem]{Lemma}
\newtheorem{Corollary}[Theorem]{Corollary}
\theoremstyle{definition}
\newtheorem{Definition}[Theorem]{Definition}
\newtheorem{Remark}[Theorem]{Remark}
\newcommand{\bTheorem}[1]{
\begin{Theorem} \label{T#1} }
\newcommand{\eT}{\end{Theorem}}
\newcommand{\bProposition}[1]{
\begin{Proposition} \label{P#1}}
\newcommand{\eP}{\end{Proposition}}
\newcommand{\bLemma}[1]{
\begin{Lemma} \label{L#1} }
\newcommand{\eL}{\end{Lemma}}
\newcommand{\bCorollary}[1]{
\begin{Corollary} \label{C#1} }
\newcommand{\eC}{\end{Corollary}}
\newcommand{\up}{{\rm up}}
\newcommand{\Up}{{\rm Up}}
\newcommand{\Fup}{{\rm F}_h^{\eps}}
\newcommand{\vrh}{\vr_h}
\newcommand{\vuh}{\vu_h}
\newcommand{\vB}{\mathbf{B}}
\newcommand{\vC}{\mathbf{C}}
\newcommand{\vE}{\mathbf{E}}
\newcommand{\vj}{\mathbf{j}}
\newcommand{\vBh}{\vB_h}
\newcommand{\vCh}{\vC_h}
\newcommand{\vEh}{\vE_h}
\newcommand{\vjh}{\vj_h}
\newcommand{\bRemark}[1]{
\begin{Remark} \label{R#1} }
\newcommand{\eR}{\end{Remark}}
\newcommand{\bDefinition}[1]{\begin{Definition} \label{D#1} }
\newcommand{\eD}{\end{Definition}}
\newcommand{\av}{\avc}
\newcommand{\avg}[1]{ \left\{\hspace{-0.3em}\left\{ #1 \right\}\hspace{-0.3em}\right\}_\sigma }
\newcommand{\avc}[1]{ \widehat{ #1 } }
\newcommand{\co}[2]{{\rm co}\{ #1 , #2 \}}
\newcommand{\Ov}[1]{\overline{#1}}
\newcommand{\auh}{ \widehat{\vuh} }
\newcommand{\avh}{ \widehat{\vh} }
\newcommand{\us}{ u_\sigma }
\newcommand{\intSh}[1] {\int_{\sigma} #1 \ds }
\newcommand{\bu}{\mathbf u}
\newcommand{\bfu}{\mathbf{u}}
\newcommand{\bfv}{\mathbf{v}}
\newcommand{\bfn}{\mathbf{n}}
\newcommand{\nG}{\bfn_{\sigma}}
\newcommand{\vv}{\bfv}
\newcommand{\vh}{\vv_h}
\newcommand{\bfx}{\mathbf{x}}
\newcommand{\xx}{\bfx}
\newcommand {\CR} {V_h}
\newcommand {\ND} {\mathcal{N}_h}
\newcommand {\RT} {\mathcal{R}_h}
\newcommand {\FS} {X_h}
\newcommand {\CRz} {V_{0,h}}
\newcommand {\NDz} {\mathcal{N}_{0,h}}
\newcommand {\RTz} {\mathcal{R}_{0,h}}
\newcommand {\FSz} {X_{0,h}}
\newcommand{\bfphi}{\boldsymbol{\phi}}
\newcommand{\bfpsi}{\boldsymbol{\psi}}
\newcommand{\PiQ}{\Pi_Q}
\newcommand{\PiV}{\Pi_V}
\newcommand{\PiN}{\Pi_{\cal N} }
\newcommand{\PiR}{\Pi_{\cal R} }
\newcommand{\PiVv}{\PiV \vv}
\newcommand{\Hdiv}{  H({\rm curl}) }
\newcommand{\Hcurl}{ H({\rm div}) }
\newcommand{\HdivO}{  H({\rm curl};\Omega) }
\newcommand{\HcurlO}{ H({\rm div};\Omega) }
\newcommand{\ds}{\,{\rm d}S(x)}
\newcommand{\bFormula}[1]{
\begin{equation} \label{#1}}
\newcommand{\eF}{\end{equation}}
\newcommand{\grid}{\Oh}
\newcommand{\Oh}{\Omega_h}
\newcommand{\TS}{\Delta t}
\newcommand{\Divh}{{\rm div}_h}
\newcommand{\Curlh}{{\bf curl}_h}
\newcommand{\Gradh}{\nabla_h}
\newcommand{\aleq}{\stackrel{<}{\sim}}
\newcommand{\vr}{\varrho}
\newcommand{\vu}{\vc{u}}
\newcommand{\vm}{\vc{m}}
\newcommand{\vn}{\vc{n}}
\newcommand{\vc}[1]{{\mathbf #1}}
\newcommand{\Div}{{\rm div}}
\newcommand{\Grad}{\nabla}
\newcommand{\Curl}{{\bf curl}}
\newcommand{\dx}{\,{\rm d} {x}}
\newcommand{\dt}{\,{\rm d} t }
\newcommand{\jump}[1]{\left\llbracket#1\right\rrbracket}
\newcommand{\abs}[1]{\left\lvert#1\right\rvert}
\newcommand{\norm}[1]{\left\lVert#1\right\rVert}
\newcommand{\dxdt}{\dx \dt}
\newcommand{\intO}[1]{\int_{\Omega} #1 \dx }
\newcommand{\Om}{\Omega}
\newcommand{\intK}[1]{\int_{K} #1 \ \dx}
\newcommand{\intG}[1]{\int_{\sigma} #1 \ \ds}
\newcommand{\intOB}[1]{\int_{\Omega} \left( #1 \right) \ \dx}
\newcommand{\intTO}[1]{\int_0^T \int_{\Omega} #1 \ \dxdt}
\newcommand{\intTOB}[1]{ \int_0^T \int_{\Omega} \left( #1 \right) \ \dxdt}
\newcommand{\R}{\mathbb{R}}
\renewcommand{\S}{\mathbb{S}}
\newcommand{\newcom}{\newcommand}
\newcommand{\beq}{\begin{equation}}
\newcommand{\eeq}{\end{equation}}
\newcom{\ben}{\begin{eqnarray}}
\newcom{\een}{\end{eqnarray}}
\newcom{\beno}{\begin{eqnarray*}}
\newcom{\eeno}{\end{eqnarray*}}
\newcom{\bali}{\begin{aligned}}
\newcom{\eali}{\end{aligned}}
\newcommand{\f}{\frac}
\definecolor{Cgrey}{rgb}{0.85,0.85,0.85}
\definecolor{Cblue}{rgb}{0.50,0.85,0.85}
\definecolor{Cred}{rgb}{1,0,0}
\definecolor{fancy}{rgb}{0.10,0.85,0.10}
\definecolor{forestgreen}{rgb}{0.13, 0.55, 0.13}
\newcommand{\cblue}{\color{blue}}
\date{}
\newcommand{\pd}{\partial}
\newcommand{\pdt}{\pd _t}
\newcommand{\Hc}{\mathcal{H}}
\newcommand{\eps}{\varepsilon}
\newcommand{\faces}{\mathcal{E}}
\newcommand{\facesK}{\faces(K)}
\newcommand{\facesint}{\faces^I}
\newcommand{\facesext}{\faces^B}
\newcommand{\mcE} {\faces}
\newcommand{\mcEe} {\facesext}
\newcommand{\mcEi} {\facesint}
\newcommand{\mcP} {\mathcal{P}}
\begin{document}

%%%%%%%%%%%%%%%%%%%%%%%%%%%%%%%%

\pagestyle{fancy} \lhead{\color{blue}{Convergent numerical solutions to compressible MHD system}}
\rhead{\emph{Y.Li and B.She}}

\title{
\bf
On convergence of numerical solutions for the compressible MHD system with exactly divergence-free magnetic field
}

\author{
Yang Li$^1$ \quad \,\,\,\,\,\,\,\,\,\,\,\,\,\,\,\,\,\,\,\,\,\,\,\,  Bangwei She$^{2,3}$ \\ \\ $^1$School of Mathematical Sciences, \\ Anhui University,  230601, Hefei, People's Republic of China \\ Email: lynjum@163.com\\ \\
$^{2}$Institute of Mathematics, \\ Czech Academy of Sciences, \v Zitn\'a 25, 115 67, Praha 1, Czech Republic \\ and \\
$^3$Department of Mathematical Analysis,  Charles University\\ Sokolovsk\'{a} 83, 186 75, Praha 8, Czech Republic \\
 Email: she@math.cas.cz \\ \\
}

\date{\today}

\maketitle

\begin{abstract}
We study a general convergence theory for the  numerical solutions of  compressible viscous and electrically conducting fluids with a focus on  numerical schemes that preserve the divergence free property of magnetic field exactly. 
Our strategy utilizes the recent concepts of dissipative weak solutions and consistent approximations. 
First, we  show the dissipative weak--strong uniqueness principle, meaning a dissipative weak solution coincides with a classical solution as long as they emanate from the same initial data.  
Next, we show the convergence of consistent approximation towards the dissipative weak solution and thus the classical solution. 
Upon interpreting the consistent approximation as the stability and consistency of suitable numerical solutions we have established a generalized Lax equivalence theory: convergence $\Longleftrightarrow$ stability and consistency. 
Further, to illustrate the application of this theory, we propose two novel mixed finite volume-finite element methods with exact divergence-free magnetic field. 
Finally, by showing solutions of these two schemes are consistent approximations, we conclude their convergence towards the dissipative weak solution and the classical solution.

\end{abstract}

{\bf Keywords: }{compressible MHD, dissipative weak solution, weak--strong uniqueness, consistent approximation, stability, convergence}

{\bf Mathematics Subject Classification:} {76W05, 35Q30, 76N10, 65M12}
\tableofcontents

\section{Introduction}
Motivated by its wide applications in astrophysics and plasma physics, we study the numerical theory for the magnetohydrodynamics (MHD) system describing the motion of compressible viscous and electrically conducting fluids. 
Let $t\in (0,T)$ and $x\in \Omega \subset \R^d\,(d=2,3)$  be respectively the time and space variables. We denote by $\vr=\vr(t,x)$ the density of the fluids, $\vu=\vu(t,x)\in \R^d$ the velocity field, $\vB=\vB(t,x)\in \R^d$ the magnetic field,  $\mathbf{E}=\mathbf{E}(t,x)\in \R^d$ the electric field and $\mathbf{j}=\mathbf{j}(t,x)\in \R^d$ the current density. 
The compressible MHD system admits the following B-E form
 \begin{equation}\label{pde}
\left\{\begin{aligned}
& \pdt \vr+\Div (\vr \vu)=0,\\
& \pdt (\vr\vu)+\Div (\vr \vu \otimes \vu )+\Grad p(\vr)=\Div \mathbb{S}(\Grad \vu)+\mathbf{j}\times \vB,\\
& \mathbf{j}=\Curl \vB, \\
& \pdt \vB+\Curl \mathbf{E}=\mathbf{0},\\
& \Div \vB=0,\\
\end{aligned}\right.
\end{equation}
in the time-space cylinder $Q_T=(0,T)\times \Omega$ with  $\vj = \vE + \vu \times \vB$.  
Here, $\mathbb{S}=\mathbb{S}(\Grad \vu)$ stands for the Newtonian viscous stress tensor
\[
\mathbb{S}(\Grad \vu)=\mu \left( \Grad \vu+\Grad^T\vu-\f{2}{d}\Div \vu \mathbb{I}  \right)+\lambda \Div \vu \mathbb{I}, \; \mu >0, \frac{d-2}{d} \mu + \lambda \geq0. 
\]
%where $\mu>0$ and $\lambda \geq 0$ are the shear and bulk viscosity coefficients, respectively.
Moreover, the pressure $p=p(\vr)$ is assumed to satisfy the isentropic law
\begin{equation}\label{plaw}
    p(\vr)= a \vr^\gamma, \quad a>0,
\end{equation}
where $\gamma>1$ is the adiabatic exponent.
System \eqref{pde} is supplemented with the boundary conditions ($\mathbf{n}$ denotes the unit outward normal on the boundary $\pd \Omega$):
%\begin{equation}\label{BCs}
% \vu|_{\pd \Omega}=\mathbf{0},\quad
% \vB\cdot \mathbf{n}|_{\pd \Omega}=0,\,\,
% \mathbf{E}\times \mathbf{n}|_{\pd \Omega}=\mathbf{0},\\
%\end{equation}
\begin{equation}\label{BCs}
\begin{cases}
\mbox{ either periodic boundary conditions with  } & \Omega = {\cal T} ^d = \left( [0,1]_{\{0,1\}} \right)^d,
\\
\mbox{ or non-slip boundary conditions }  &  \vu|_{\pd \Omega}=\mathbf{0},\,\,
 \vB\cdot \mathbf{n}|_{\pd \Omega}=0,\,\, 
 \mathbf{E}\times \mathbf{n}|_{\pd \Omega}=\mathbf{0},
\end{cases}
\end{equation}
together with the initial data
\begin{equation}\label{ini_c}
(\vr,\vr\vu,\vB)|_{t=0}=(\vr_0,\mathbf{m}_0,\vB_0).
%(\vr,\vr\vu,\vB,\vE)|_{t=0}=(\vr_0,\mathbf{m}_0,\vB_0, \Curl \vB_0 - \vu_0 \times \vB_0).
\end{equation}

In contrast to the rich numerical convergence theory of  incompressible MHD problems, see e.g.~\cite{BCP, HQS, Prohl, ZYB}, the numerical analysis of  compressible MHD system~\eqref{pde} is open in general. The only result in literature is the convergence of a finite element approximation towards (a suitable subsequence of) weak solutions recently  reported by Ding and Mao~\cite{Ding} based on the original form\footnote{By ``original form" we mean that the third and fourth equations of the system \eqref{pde} are replaced by $\pdt \vB + \Curl (\Curl \vB - \vu \times \vB)=0$.} of the MHD system. 
We point out that the study in \cite{Ding} requires the technical assumption on the adiabatic exponent $\gamma>3$ that unfortunately excludes the physical gas parameter, e.g. $\gamma=7/5$ for diatomic gas and $\gamma=5/3$ for monatomic gas. 

The aim of this paper is to build a general convergence theory for the MHD system \eqref{pde} in spirit of the celebrated Lax equivalence theorem. 
Our strategy leans on the recent concepts of consistent approximation, dissipative weak solution, and the so-called dissipative weak--strong uniqueness principle developed by Feireisl et al.~\cite{FeLMMiSh} in the context of compressible Navier--Stokes equations.  

The current paper is a continuation of our previous work \cite{LiShe_MHD1}, where we studied a general convergence theory for numerical solutions that preserve the divergence free of magnetic field weakly. Here, we are interested in designing numerical schemes that preserve the divergence free of magnetic field exactly and providing a general convergence theory for such numerical solutions. 

The highlights of the paper reads:  
\begin{itemize}[noitemsep,topsep=0pt]
\item[i)] We introduce the concept of dissipative weak solution to the MHD system and prove that a dissipative weak solution coincides with the classical solution of the same problem, see Theorem~\ref{Th1};
\item[ii)] We establish a generalized Lax equivalence theory for a large class of numerical solutions, see Theorem~\ref{Th2};
\item[iii)] We propose two invariant domain preserving schemes, see {\bf Scheme-I} and {\bf Scheme-II} given later in Section \ref{sec_con}, meaning that the numerical schemes preserve the positivity of density,  conservation of the mass, stability of the total energy, and divergence free of the magnetic field; 
\item[iv)] We apply Theorem~\ref{Th2} to the convergence analysis of numerical solutions of two mixed finite volume -- finite element methods, that also indicates the global-in-time existence of  dissipative weak solutions for the full range of physically relevant adiabatic exponent $\gamma \in (1,\infty)$. 

%\item[ii)] The theory suits for the whole range of physically relevant adiabatic exponent $\gamma \in (1, \infty)$, which also indicates the global-in-time existence of dissipative weak solutions for all $\gamma>1$. 
%\item[ii)] The theory holds for a large class of numerical solutions that falls into the class of consistent approximations given below in Definition~\ref{def_ca}.  
%\item[iii)] Our numerical schemes preserve on the discrete level the energy stability, exact divergence free of magnetic field, conservation of total mass, and positivity of density.
\end{itemize}

The rest of this paper is organized as follows. 
In Section \ref{sec_mr} we introduce the definition of dissipative weak solutions and consistent approximation to the MHD system \eqref{pde}, followed by the main results. 
In Section \ref{sec_ws}, we establish the relative energy inequality in the class of dissipative weak solutions and prove the weak--strong uniqueness principle. 
In Section \ref{sec_con} we prove the convergence of a consistent approximation, and show its application to the convergence analysis of numerical solutions in terms of two mixed finite volume -- finite element  methods. Section \ref{sec_end} is the conclusion of the paper.

\section{Main results}\label{sec_mr}
In this section, we introduce the concepts of dissipative weak solution and consistent approximation for the compressible MHD system followed by the main results. 
\subsection{Dissipative weak solution and consistent approximation}
Let $\mathcal{M}\left( \overline{\Om}\right)$ signifies the space of signed Borel measures over $ \overline{\Om}$ and $\mathcal{M}^{+}\left( \overline{\Om}\right)$ means the non-negative ones. We now introduce the concept of \emph{dissipative weak solutions}.
\begin{Definition}[Dissipative weak solution]
A quadruple $(\vr,\vu,\vB,\mathbf{E})$ is said to be a dissipative weak solution to the compressible MHD system \eqref{pde}--\eqref{ini_c} if:
\begin{itemize}
\item{ Regularity class}
\[
\vr \geq 0,\,\, \vr \in L^{\infty}(0,T;L^{\gamma}(\Om)),\,\,\sqrt{\vr}\vu \in L^{\infty}(0,T;L^{2}(\Om;\R^d)),
\]
\[
\vB \in L^{\infty}(0,T;L^{2}(\Om;\R^d)),\,\,\Grad \vu \in L^2(0,T;L^{2}(\Om;\R^{d\times d})),
\]
\[
\mathbf{j}  \in L^2(0,T;L^{2}(\Om;\R^{d})),\,\,\,\mathbf{E}\in  L^2(0,T;L^{\f{3}{2}  }(\Om;\R^{d}));
\]
\item{ The continuity equation}
\beq\label{lbb3}
\int_0^{\tau}\int_{\Om} \Big ( \vr \partial_t \varphi + \vr\vu\cdot \Grad \varphi  \Big )\dxdt
=\left[ \int_{\Om} \vr \varphi \dx\right]_{t=0}^{t=\tau}
\eeq
for a.e. $\tau \in (0,T)$, any $\varphi \in C^1([0,T]\times \overline{\Om})$;

\item { The momentum equation}
\[
\int_0^{\tau}\int_{\Om} \Big ( \vr\vu\cdot \partial_t \vv + (\vr\vu\otimes \vu): \Grad \vv+p(\vr)\Div \vv-\mathbb{S}(\Grad\vu):\Grad \vv
+\left(\mathbf{j} \times \mathbf{B}\right)\cdot \vv
 \Big ) \dxdt
\]
\beq\label{lbb4}
+
 \int_0^{\tau}\int_{\overline{\Om}} \Grad \vv :{\rm d} \mu_c(t)\dt
 +
 \int_0^{\tau}\int_{\overline{\Om}} \vv \cdot {\rm d} \mu_B(t)\dt
=\left[ \int_{\Om}  \vr\vu \cdot\vv   \dx \right]_{t=0}^{t=\tau}
\eeq
for a.e. $\tau \in (0,T)$ and for any $\vv \in C_c^1([0,T]\times \Om;\R^d)$,  where $\vj = \vE + \vu \times \vB$. Here 
 $\mu_c\in L^{\infty}(0,T;\mathcal{M}(\overline{\Om};\R^{d \times d}_{sym}))$ and $\mu_B\in L^{2}(0,T;\mathcal{M}(\overline{\Om};\R^{d \times d}))$ reflect the {\em concentration/oscillation} defects;

\item {  The Maxwell equation}
\beq\label{lbb5}
\int_0^{\tau}\int_{\Om} \Big ( \vB\cdot \pdt \vc{\varphi}-   \mathbf{E}\cdot \Curl  \vc{\varphi} \Big) \dxdt
=\left[ \int_{\Om}  \vB\cdot\vc{\varphi}   \dx \right]_{t=0}^{t=\tau}
\eeq
for a.e. $\tau \in (0,T)$, any $\vc{\varphi} \in C_c^1([0,T]\times \overline{\Om};\R^d),\vc{\varphi}\cdot \mathbf{n}|_{\partial \Omega}=0$;

\item {  Amp\`ere's law}
\beq\label{lbb5.1}
\int_0^{\tau}\int_{\Om}   \mathbf{j} \cdot \vc{\varphi} \dxdt
=\int_0^{\tau}\int_{\Om}   \vB\cdot
\Curl \vc{\varphi} \dxdt
\eeq
for a.e. $\tau \in (0,T)$, any $\vc{\varphi} \in C_c^1([0,T]\times \overline{\Om};\R^d),\vc{\varphi}\cdot \mathbf{n}|_{\partial \Omega}=0$;

\item { Divergence--free of magnetic field
\beq\label{lbb6}
\Div \vB(t) =0
\eeq
for a.e. $t\in (0,T)$;
}

\item { Balance of total energy 
\[
\int_{\Om} \left[\f{1}{2}\vr |\vu|^2+\f{1}{2}|\mathbf{B}|^2+H(\vr)\right](\tau,x)\dx
+\int_0^{\tau}\int_{\Om} \left (  \mathbb{S}(\Grad\vu):\Grad \vu
+ \left| \mathbf{j} \right|^2
\right )\dxdt
\]
\beq\label{lbb7}
+\int_{\overline{\Om} }{\rm d}\mathfrak{D}(\tau)
+ \int_0^{\tau}
  \int_{\overline{\Om}}  {\rm d}\mathfrak{C}
\leq
\int_{\Om}\left[ \f{1}{2}\f{|\mathbf{m}_0|^2}{\vr_0}+\f{1}{2}|\mathbf{B}_0|^2+H(\vr_0)\right]\dx
\eeq
for a.e. $\tau \in (0,T)$ and some {\em energy defects} $\mathfrak{D}\in L^{\infty}(0,T; \mathcal{M}^{+}(\overline{\Om}))$, $\mathfrak{C} \in \mathcal{M}^{+}([0,T]\times \overline{\Om})$; $H(\vr)$ is the potential energy given by
\[
H(\vr):=\f{a}{\gamma-1}\vr^{\gamma};
\]
}

\item { Compatibility conditions
%\beq\label{lbb8.1}
%\mathbf{j}=\mathbf{E}+\vu \times \vB \,\,\,\,\,\, \text{    a.e. in   } (0,T)\times \Om,
%\eeq
\beq\label{lbb8}
 \int_0^T \psi(t)\int_{\overline{\Om}}{\rm d}|\mu_c(t)|\dt \lesssim \int_0^T \psi(t)\int_{\overline{\Om}}{\rm d}\mathfrak{D}(t)\dt,
\eeq
\beq\label{lbb8-2}
 \int_0^T \psi(t)\int_{\overline{\Om}}{\rm d}|\mu_B(t)|\dt  \lesssim  \f{C}{\epsilon}\int_0^T \psi(t)\int_{\overline{\Om}}{\rm d}\mathfrak{D}(t)\dt +\epsilon  \int_0^T
  \int_{\overline{\Om}} \psi(t) {\rm d}\mathfrak{C}
\eeq
for any $\epsilon>0,\psi \in C([0,T]),\psi \geq 0$.
}
\end{itemize}
\end{Definition}

\begin{Remark}
We give some comments on the measures appearing above. In the momentum equation, we denote by $\mu_c$ the concentration/oscillation defects coming from the nonlinear terms $\vr \vu\otimes \vu,p(\vr)$; and $\mu_B$ reveals that of $\mathbf{j}\times \vB$. In the balance of total energy, the non-negative measure $\mathfrak{D}$ stands for the defects from the total energy $\f{1}{2}\vr |\vu|^2+\f{1}{2}|\mathbf{B}|^2+H(\vr)$, while $\mathfrak{C}$ includes the defects from $\mathbb{S}(\Grad\vu):\Grad \vu
+ \left| \mathbf{j} \right|^2$. Motivated by suitable numerical schemes, these measures are interrelated through the compatibility conditions. This is crucial for showing the weak-strong uniqueness property. 
\end{Remark}

%%%%%%%%%%%%%%%%%%%%%%
Next, we introduce the concept of consistent approximation for the compressible MHD system.
\begin{Definition}[Consistent approximation]\label{def_ca}
Let the discrete operator $\Gradh$ (resp. $\Divh$ and $\Curlh$) be compatible\footnote{A simple example of such compatibility is that $\Gradh = \Grad$ element-wisely.} with the continuous differential operator $\Grad$ (resp. $\Div$  and $\Curl$) in the sense of \cite[Definition 5.8]{FeLMMiSh}.
 We say a numerical approximation $(\vrh,\vuh,\vBh, \vEh)$ is a consistent approximation of the MHD system~\eqref{pde}--\eqref{ini_c} if the following stability and consistency conditions hold:
\begin{enumerate}
\item {\bf Stability.}
The numerical approximation is stable in the sense that
\begin{equation}\label{es}
E_h(\tau) +  \int_0^\tau \intOB{ \S(\Gradh \vuh) : \Gradh \vuh
+   |\vjh |^2 } \leq E_h(0),  \quad \forall \tau \in [0,T],
\end{equation}
where $ E_h =   \intO{ \left(\frac{1}{2} \vrh  \abs{\Pi_h \vuh }^2  + \mathcal{H}(\vrh )+\frac{1}{2} \abs{\vBh }^2 \right)  } $  the total energy,  $\Hc(\vrh) =  \frac{a}{\gamma-1}\vrh^\gamma$ the pressure potential, and  $\Pi_h$ is either identity or a piecewise constant projection operator satisfying $\norm{\Pi_h \vuh - \vuh }_{L^2(\Om)} \leq h \norm{\Gradh \vuh}_{L^2(\Om)}$.
\item {\bf Consistency.}
The numerical approximation is consistent if: 

\begin{subequations}\label{cP}
$\bullet$ {\bf Continuity equation.} It holds for  any $\phi \in C_c^1([0,T) \times \Ov{\Omega})$ that
\begin{equation} \label{cP1}
 \intTOB{  \vrh \partial_t \phi + \vrh \vuh \cdot \Grad \phi }
 =- \intO{ \vrh^0 \phi(0,\cdot) }
+  e_{1,h}[\phi],
\end{equation}
where $e_{1,h}[\phi] \to 0 \mbox{ as } h \to 0 \mbox{ for any } \phi \in C_c^M([0,T) \times \Ov{\Omega})  \mbox{ for some integer } M\geq 1;$

$\bullet$ {\bf Balance of momentum.}  It holds for any $\vv \in C_c^1([0,T) \times \Omega; \R^d)$ that
\begin{equation} \label{cP2}
\begin{aligned}
 \intTOB{  \vrh \Pi_h \vuh \cdot \partial_t \vv + \vrh \Pi_h \vuh \otimes \vuh  : \Grad \vv  + p_h \Div \vv
 -    \S( \Gradh \vuh) : \Grad \vv }
\\  +\intTO{  \vjh \times \vBh \cdot  \vv}
= - \intO{ \vrh^0 {\Pi_h \vuh^0} \cdot \vv(0,\cdot) }
+e_{2,h}[\vv]
\end{aligned}
\end{equation}
with $\vjh = \vEh + \vuh \times \vBh$,  
where $e_{2,h}[\vv] \to 0$ as $ h \to 0$  for any $ \vv \in C_c^M([0,T) \times \Omega; \R^d)$  for some integer  $M\geq 1$;

$\bullet$ {\bf The Maxwell equation.} It holds for any $\vC \in C_c^1([0,T) \times \Ov{\Omega}; \R^d)$,  $\vC \cdot \vn|_{\pd \Omega}=0$ that
 \begin{equation} \label{cP3}
   \intTOB{ \vBh \cdot \pdt \vC  - \vEh  \cdot \Curlh \vBh }
=-  \intO{ \vBh^0 \cdot \vC(0,\cdot)}  +e_{3,h}[\vC]
 \end{equation}
 where $e_{3,h}[\vC] \to 0$ as $ h \to 0$  for any $ \vC \in C_c^M([0,T) \times \overline{\Omega}; \R^d),\vC \cdot \vn|_{\pd \Omega}=0$  for some integer  $M\geq 1$;

$\bullet$ {\bf Amp\`ere's law.} It holds  for any $\bfpsi \in C_c^1([0,T) \times \Ov{\Omega}; \R^d)$,  $\bfpsi \cdot \vn|_{\pd \Omega}=0$ that 
\begin{equation} \label{cP4}
 \intO{\vjh \cdot \bfpsi - \vBh \cdot\Curl \bfpsi }
= e_{4,h}[\psi]
 \end{equation}
where $e_{4,h}[\psi] \to 0$ as $ h \to 0$  for any $ \psi \in C^M(\Ov{\Omega})\cap L^2_0(\Omega)$ for some integer  $M\geq 1$;

$\bullet$ {\bf  Divergence free of magnetic field.} It holds that
 \begin{equation}\label{cP5}
 \Divh \vBh =0 .
 \end{equation}
 \end{subequations}
\end{enumerate}
\end{Definition}
\medskip

%%%%%%%%%%%%%%%%%%%%%%

\subsection{Main results}
Now we are ready to present the main results. The first one states the weak--strong uniqueness property. 
\begin{Theorem}[Weak--strong uniqueness]\label{Th1}
Let $(\widetilde{\vr},\widetilde{\vu},\widetilde{\vB},
\widetilde{\mathbf{E}})$  be subject to 
\begin{equation}\label{STC}
\left\{\begin{aligned}
& \widetilde{\vr} \in C^1([0,T]\times \overline{\Om}),\,\,\widetilde{\vr}>0,\\
& \widetilde{\vu}\in C^1([0,T]\times \overline{\Om};\R^d),\,\,\widetilde{\vu} |_{\pd \Om}=\vc{0},\\
& \widetilde{\vB}\in C^1([0,T]\times \overline{\Om};\R^d),\,
\widetilde{\vB} \cdot \mathbf{n}|_{\partial \Omega}=0,\,\,\Div  \widetilde{\vB} =0.\\
\end{aligned}\right.
\end{equation}
be a classical solution to \eqref{pde}--\eqref{ini_c} starting from the smooth initial data $(\vr_0,\vu_0,\vB_0)$ with strictly positive $\vr_0$ and $\Div \vB_0=0$. Let $(\vr,\vu,\vB,\vE)$ be a dissipative weak solution to \eqref{pde}--\eqref{ini_c} emanating from $(\vr_0, \vr_0\vu_0,\vB_0)$. Then
\[
\vr=\widetilde{\vr},\,\,\,\,\vu=\widetilde{\vu},\,\,\,\,\vB=\widetilde{\vB},\,\,\,\,\vE=\widetilde{\vE}\,\,\,\,
\text{      in    } (0,T)\times \Om,
\]
\[
\mu_c=\mathbf{0},\,\,\mu_B=\mathbf{0},\,\, \mathfrak{D}=0,\,\, \mathfrak{C}=0.
\]
\end{Theorem}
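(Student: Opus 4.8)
The plan is to prove the theorem by the relative energy (weak--strong uniqueness) method, in the spirit of Feireisl et al. I introduce the relative energy functional
\[
\mathcal{E}\left(\vr,\vu,\vB \,\Big|\, \tvr,\tvu,\widetilde{\vB}\right)
:=\intO{\left[\frac{1}{2}\vr\abs{\vu-\tvu}^2 + H(\vr)-H'(\tvr)(\vr-\tvr)-H(\tvr) + \frac{1}{2}\abs{\vB-\widetilde{\vB}}^2\right]}.
\]
By strict convexity of $H$ together with $\tvr>0$, this functional is nonnegative and controls $\vr\abs{\vu-\tvu}^2$, $\abs{\vB-\widetilde{\vB}}^2$, and a Bregman-type distance of the densities. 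Since the weak and strong solutions emanate from the same data, $\mathcal{E}(0)=0$, so the whole argument reduces to establishing a Gronwall inequality $\mathcal{E}(\tau)\le \int_0^\tau \omega(t)\,\mathcal{E}(t)\dt$ with $\omega\in L^1(0,T)$, whence $\mathcal{E}\equiv 0$.

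To derive the relative energy inequality I start from the total energy balance \eqref{lbb7} and subtract the cross terms generated by testing the weak formulation against the strong solution. Concretely, I test the momentum equation \eqref{lbb4} with $\vv=\tvu$ (admissible since $\tvu|_{\pd\Om}=\mathbf{0}$), the continuity equation \eqref{lbb3} with $\varphi=\frac{1}{2}\abs{\tvu}^2$ and with $\varphi=H'(\tvr)$, the Maxwell equation \eqref{lbb5} with $\vc{\varphi}=\widetilde{\vB}$ (admissible since $\widetilde{\vB}\cdot\vn|_{\pd\Om}=0$), and invoke Amp\`ere's law \eqref{lbb5.1} with $\widetilde{\vj}:=\Curl\widetilde{\vB}$. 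Because $(\tvr,\tvu,\widetilde{\vB},\widetilde{\vE})$ solves \eqref{pde} pointwise, I may substitute its time derivatives by the corresponding fluxes and forces, so that the leading-order products reorganize into quadratic expressions in $\vu-\tvu$, $\vr-\tvr$, and $\vB-\widetilde{\vB}$. The decisive structural point is the MHD energy cancellation: the Lorentz work in the momentum balance and the induced-EMF work in the Maxwell balance combine, mirroring the continuous identity
\[
\intO{(\vj\times\vB)\cdot\vu}+\intO{(\vu\times\vB)\cdot\vj}=0,
\]
so that the magnetic coupling leaves only terms quadratic in $\vB-\widetilde{\vB}$ (controllable by $\mathcal{E}$) together with a favorable dissipative contribution $\int_0^\tau\intO{\abs{\vj-\widetilde{\vj}}^2}$ on the left.

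After these manipulations the remainder $\mathcal{R}(t)$ consists of convective terms $\vr(\vu-\tvu)\otimes(\vu-\tvu):\Grad\tvu$, pressure remainders of the form $\big(p(\vr)-p'(\tvr)(\vr-\tvr)-p(\tvr)\big)\Div\tvu$, viscous cross terms, and magnetic remainders. Using the $C^1$ bounds on $(\tvr,\tvu,\widetilde{\vB})$, the strict positivity of $\tvr$, and Korn together with Young inequalities, each of these is bounded by $C\,\mathcal{E}(t)$ plus arbitrarily small multiples of the nonnegative dissipation $\intO{\S(\Gradh\vuh-\Grad\tvu):(\Grad\vu-\Grad\tvu)}$ and of $\intO{\abs{\vj-\widetilde{\vj}}^2}$, which sit on the left. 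The measure terms $\mu_c,\mu_B$ surviving from \eqref{lbb4} are absorbed precisely by the compatibility conditions: \eqref{lbb8} dominates $\mu_c$ by the energy defect $\mathfrak{D}$, while the $\epsilon$-split in \eqref{lbb8-2} distributes $\mu_B$ between $\mathfrak{D}$ and the dissipation defect $\mathfrak{C}$; both $\mathfrak{D}$ and $\mathfrak{C}$ appear with favorable sign on the left of \eqref{lbb7}, so choosing $\epsilon$ small lets me hide them. What remains is exactly the Gronwall inequality above.

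The hard part is the magnetic coupling. Unlike the convex fluid variables, the magnetic energy defect and the Lorentz-force concentration $\mu_B$ are genuinely new to MHD, and ensuring that the cancellation of the Lorentz and induction terms survives the passage to relative quantities---while keeping the low-integrability electric field $\vE\in L^2(0,T;L^{3/2})$ under control---is where the two-sided $\epsilon$-balance in \eqref{lbb8-2} is indispensable. Once the Gronwall inequality is in force, $\mathcal{E}(0)=0$ yields $\mathcal{E}(\tau)=0$ for a.e. $\tau$, hence $\vr=\tvr$, $\vu=\tvu$, $\vB=\widetilde{\vB}$; the simultaneous vanishing of the left-hand dissipation forces $\vj=\widetilde{\vj}=\Curl\widetilde{\vB}$, whence $\vE=\vj-\vu\times\vB=\widetilde{\vE}$ by Ohm's law and \eqref{lbb5.1}; and tracking the same inequalities shows all defects $\mu_c,\mu_B,\mathfrak{D},\mathfrak{C}$ vanish.
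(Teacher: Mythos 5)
Your proposal is correct and follows essentially the same route as the paper's own proof: the identical relative energy functional, the same test functions ($\frac{1}{2}|\tvu|^2$ and $H'(\tvr)$ in the continuity equation \eqref{lbb3}, $\tvu$ in the momentum equation \eqref{lbb4}, $\widetilde{\vB}$ in the Maxwell equation \eqref{lbb5}), the same absorption of the defect measures $\mu_c,\mu_B$ via the compatibility conditions \eqref{lbb8}--\eqref{lbb8-2} with the $\epsilon$-split against $\mathfrak{D}$ and $\mathfrak{C}$, and a concluding Gronwall argument forcing $\mathcal{E}\equiv 0$, the vanishing of all defects, and $\vj=\Curl\widetilde{\vB}$, hence $\vE=\widetilde{\vE}$. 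The only detail your sketch compresses is the paper's cut-off decomposition of the density into the regions $[0,\inf\tvr]$, $[\inf\tvr,\sup\tvr]$, $[\sup\tvr,\infty)$ used to bound the mixed terms of type $(\vr-\tvr)(\tvu-\vu)$, but this is the standard step and fits within the Young/Korn framework you describe.
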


 The second result is concerned with the convergence of a consistent approximation towards a DW solution as well as a classical solution.
\begin{Theorem}[Convergence of a consistent approximation]\label{Th2}
Let $(\vrh,\vuh,\vBh, \vEh)$ be a consistent approximation of the MHD system~\eqref{pde}--\eqref{ini_c} in the sense of Definition~\ref{def_ca}. Then the following convergence results hold:
\begin{enumerate}
\item {\bf Convergence to DW solution.} 
There exists a subsequence of  $(\vrh,\vuh,\vBh, \vEh)$, not relabelled, such that
\[
\begin{aligned}
\vrh \rightarrow & \; \vr \mbox{ weakly-(*) in } L^\infty(0,T;L^\gamma(\Om)),\quad
&& \vuh \rightarrow   \; \vu \mbox{ weakly in } L^2 ((0,T)\times \Om; \R^d), \\
\vBh \rightarrow &  \;  \vB \text{ weakly-(*) in } L^\infty(0,T; L^2( \Om; \R^d)),\quad
&& \vEh \rightarrow  \; \vE \text{ weakly in  } L^2 (0,T;L^{\f{3}{2}}( \Om;\R^d)). 
\end{aligned}
\]
\item {\bf Convergence to classical solution.}
In addition, let the MHD system \eqref{pde}--\eqref{ini_c} admit a classical solution in the class \eqref{STC}. 
Then the above weak limit is unconditional (no need of subsequence but the whole sequence) and the limit quantity $(\vr,\vu,\vB,\vE)$ coincides with the classical solution.

\end{enumerate}
\end{Theorem}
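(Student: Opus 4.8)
The plan is to run the standard compactness programme for dissipative weak solutions: derive uniform bounds from the stability estimate \eqref{es}, extract weak limits, pass to the limit in the consistency relations \eqref{cP1}--\eqref{cP5} to identify the limit as a dissipative weak solution, and finally invoke the weak--strong uniqueness principle of Theorem~\ref{Th1} to upgrade subsequential convergence to convergence of the whole sequence. First I would extract uniform bounds. Assuming $E_h(0)$ stays bounded (which follows once the discrete initial data converge to $(\vr_0,\vc{m}_0,\vB_0)$), the coercivity $\mu>0$ of $\S$ together with \eqref{es} yields that $\vrh$ is bounded in $L^\infty(0,T;L^\gamma(\Om))$, that $\sqrt{\vrh}\,\Pi_h\vuh$ and $\vBh$ are bounded in $L^\infty(0,T;L^2(\Om))$, and that $\Gradh\vuh$ and $\vjh$ are bounded in $L^2((0,T)\times\Om)$. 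The projection bound $\norm{\Pi_h\vuh-\vuh}_{L^2}\leq h\norm{\Gradh\vuh}_{L^2}$ shows that $\Pi_h\vuh$ and $\vuh$ share the same weak limit, and a discrete Sobolev--Poincar\'e inequality gives $\vuh\in L^2(0,T;L^6(\Om))$. Writing $\vEh=\vjh-\vuh\times\vBh$ and using H\"older ($L^2_tL^6_x\cdot L^\infty_tL^2_x\hookrightarrow L^2_tL^{3/2}_x$) bounds $\vEh$ in $L^2(0,T;L^{3/2}(\Om))$. Weak-$(*)$ compactness then delivers a (not relabelled) subsequence realizing the convergences claimed in part~(1).

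Next I would pass to the limit in the consistency relations. The linear terms converge by the weak convergences above, and the consistency errors $e_{i,h}$ vanish by Definition~\ref{def_ca}. The nonlinearities $\vrh\vuh\otimes\vuh$, $p(\vrh)$, the Lorentz force $\vjh\times\vBh$, and the energy densities $\tfrac12\vrh|\Pi_h\vuh|^2+\Hc(\vrh)+\tfrac12|\vBh|^2$ are controlled only in $L^1$-type spaces, so they converge merely weakly-$(*)$ as measures. Recording, for each nonlinearity, the gap between the weak limit of the nonlinearity and the nonlinearity of the weak limit defines the concentration/oscillation defects $\mu_c$, $\mu_B$ and the energy defects $\mathfrak D$, $\mathfrak C$. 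This produces \eqref{lbb3}--\eqref{lbb6} directly, while the energy inequality \eqref{lbb7} follows from weak lower semicontinuity applied to \eqref{es}.

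The main obstacle is the compatibility conditions \eqref{lbb8}--\eqref{lbb8-2}, which tie the momentum defects to the energy defect and are exactly what makes Theorem~\ref{Th1} applicable. For $\mu_c$ this is the familiar fact that the concentration defect generated by $\vrh\vuh\otimes\vuh+p(\vrh)\I$ is dominated by the defect in the kinetic plus internal energy $\tfrac12\vrh|\vuh|^2+\Hc(\vrh)$, giving \eqref{lbb8}. The delicate point is the Lorentz-force defect $\mu_B$: since only $\vjh\rightharpoonup\vj$ and $\vBh\rightharpoonup\vB$ weakly, their product is uncontrolled in $L^1$, and Young's inequality $|\vjh\times\vBh|\leq\tfrac{\epsilon}{2}|\vjh|^2+\tfrac{1}{2\epsilon}|\vBh|^2$ must be used to split the defect between the current dissipation $|\vjh|^2$ (absorbed by $\mathfrak C$ with weight $\epsilon$) and the magnetic energy $\tfrac12|\vBh|^2$ (absorbed by $\mathfrak D$ with weight $C/\epsilon$), which is precisely the structure of \eqref{lbb8-2}. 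Making this rigorous requires a single Young-measure representation generating all the nonlinear quantities at once, so that the same defect measures appear consistently in the equations and in the energy balance; this bookkeeping is where I expect the bulk of the work to lie.

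Finally, for part~(2), suppose a classical solution in the class \eqref{STC} exists. Every subsequential dissipative weak limit constructed above emanates from the same initial data, so Theorem~\ref{Th1} forces it to coincide with the classical solution and all defect measures $\mu_c,\mu_B,\mathfrak D,\mathfrak C$ to vanish. Because the limit is thereby independent of the chosen subsequence, a standard contradiction argument promotes the convergence from subsequences to the whole sequence, which completes the proof.
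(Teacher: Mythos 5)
Your proposal follows essentially the same route as the paper: uniform bounds from the stability estimate \eqref{es}, extraction of weak-(*) limits, defect measures $\mu_c,\mu_B,\mathfrak{D},\mathfrak{C}$ defined as the gaps between weak limits of the nonlinearities and the nonlinearities of the limits, the compatibility conditions obtained by splitting the Lorentz-force defect via $|\vjh\times\vBh|\le \frac{\epsilon}{2}|\vjh|^2+\frac{1}{2\epsilon}|\vBh|^2$, and finally Theorem~\ref{Th1} plus a subsequence argument for part (2). All of this matches the paper's argument (which itself defers most details to earlier work).

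There is, however, one genuine gap: you never identify the weak limits of the bilinear products $\vrh\vuh$ (equivalently $\vrh\Pi_h\vuh$) and $\vuh\times\vBh$. You call the convective flux in \eqref{cP1} a ``linear term'', but it is a product of two sequences each of which converges only weakly, and weak convergence of both factors does not imply convergence of the product to the product of the limits. This cannot be absorbed into a defect measure: the continuity equation \eqref{lbb3} in the definition of a DW solution carries no defect, so you must prove $\vrh\vuh \rightharpoonup \vr\vu$ exactly; likewise $\mu_B$ is defined relative to $\vj\times\vB$ with $\vj=\vE+\vu\times\vB$, which only makes sense after you have shown $\vjh=\vEh+\vuh\times\vBh\rightharpoonup \vE+\vu\times\vB=\vj$, i.e. $\Ov{\vu\times\vB}=\vu\times\vB$. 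The paper flags exactly this step and handles it by an analogue of \cite[Lemma 3.7]{AnFeNo}, which rests on compactness extracted from the equations themselves: time-derivative control of $\vrh$ and $\vBh$ from \eqref{cP1} and \eqref{cP3}, spatial information on $\vBh$ from Amp\`ere's law and $\Divh\vBh=0$, combined with an Aubin--Lions/compensated-compactness argument, so that one factor in each product converges strongly enough. Your proposed ``single Young-measure representation'' does not substitute for this: Young measures identify limits of nonlinear functions of one generating sequence, but they do not force the joint limit of two distinct weakly convergent sequences to factorize, which is precisely what is needed here.
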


\section{Weak--strong uniqueness}\label{sec_ws}
In this section we prove the first main result, that is the weak--strong uniqueness principle stated in Theorem~\ref{Th1}. The main tool we use in the proof is the so-called \emph{relative entropy functional} developed by Feireisl et al.~\cite{FJN} in the context of compressible Navier--Stokes system. 
We shall introduce the relative entropy functional for compressible MHD system in the context of DW solutions. To fix ideas, we proceed in case of Dirichlet boundary conditions, while the periodic case can be carried out analogously. 

\subsection{Relative energy inequality}\label{rel_ener}
The goal of this subsection is to establish the relative energy inequality in the context of dissipative weak solutions. To this end, let $(\vr,\vu,\vB, \vE)$ be a dissipative weak solution to \eqref{pde}--\eqref{ini_c} emanating from $(\vr_0,\mathbf{m}_0,\vB_0)$ and $(r,\mathbf{U},\mathbf{b})$ belongs to the regularity class {\cblue \eqref{STC}}. 
Similar to \cite{FNS} in the context of finite energy weak solutions, we introduce the \emph{relative entropy} in the framework of dissipative weak solutions
%We measure their difference by the following \emph{relative entropy} functional 
\[
\mathcal{E}\Big( (\vr,\vu,\vB)\,\Big|\,(r,\mathbf{U},\mathbf{b})\Big)(\tau)
\]
\beq\label{lbn2}
=\int_{\Om}\left(  \f{1}{2}\vr |\vu-\mathbf{U}|^2+\f{1}{2} |\vB-\mathbf{b}|^2
+H(\vr)-H(r)-H'(r)(\vr-r)
\right)(\tau,\cdot) \dx.
\eeq
We further rewrite the relative entropy in an equivalent form as follows
\[
\mathcal{E}\Big((\vr,\vu,\mathbf{b})\,\Big|\,(r,\mathbf{U},\mathbf{b})\Big)(\tau)
=\int_{\Om} \left(  \f{1}{2}\vr |\vu|^2+\f{1}{2}|\vB|^2+H(\vr) \right) \dx
+\int_{\Om}\f{1}{2} \vr  |\mathbf{U}|^2\dx
\]
\beq\label{lbn3}
-\int_{\Om} \vr \vu \cdot \mathbf{U} \dx
-\int_{\Om} \vB\cdot \mathbf{b} \dx
-\int_{\Om} \vr  H'(r) \dx
+\int_{\Om} p(r) \dx+\f{1}{2}\int_{\Om}|\mathbf{b}|^2  \dx.
\eeq
The crucial observation is that the integrals on the right-hand side of (\ref{lbn3}) can be expressed through (\ref{lbb3})-(\ref{lbb7}) with suitable choices of test functions. To handle the density-dependent terms, testing the continuity equation (\ref{lbb3}) by $\f{1}{2}|\mathbf{U}|^2$ gives
\beq\label{lbn4}
\left[ \int_{\Om} \f{1}{2} \vr  |\mathbf{U}|^2  \dx \right]_{t=0}^{t=\tau}
=\int_0^{\tau}\int_{\Om} \Big ( \vr\mathbf{U}\cdot \pdt \mathbf{U}+ \vr\vu \cdot \Grad \mathbf{U}  \cdot \mathbf{U}\Big )\dxdt.
\eeq
Analogously, we use $H'(r)$ as a test function in (\ref{lbb3}) to find that
\beq\label{lbn5}
\left[ \int_{\Om}  \vr H'(r)   \dx \right]_{t=0}^{t=\tau}
=\int_0^{\tau}\int_{\Om} \Big ( \vr \pdt H'(r)+ \vr\vu \cdot  \Grad H'(r) \Big )\dxdt.
\eeq
Upon choosing $\mathbf{U}$ as a test function in the momentum equation (\ref{lbb4}),
\[
\left[ \int_{\Om}  \vr\vu \cdot \mathbf{U}  \dx \right]_{t=0}^{t=\tau}
=\int_0^{\tau}\int_{\Om} \Big ( \vr\vu\cdot \pdt \mathbf{U} + \vr\vu\otimes \vu: \Grad \mathbf{U}+p(\vr)\Div \mathbf{U}
\]
\beq\label{lbn6}
-\mathbb{S}(\Grad\vu):\Grad \mathbf{U}
+( \mathbf{j} \times \vB )\cdot \mathbf{U}
 \Big ) \dxdt
+
 \int_0^{\tau}\int_{\overline{\Om}} \Grad \mathbf{U} :{\rm d} \mu_c(t)\dt
 +
 \int_0^{\tau}\int_{\overline{\Om}} \mathbf{U} \cdot {\rm d} \mu_B(t)\dt.
\eeq
To calculate the term involved with the magnetic field, we choose $\mathbf{b}$ as a test function in (\ref{lbb5}) to deduce that
\beq\label{lbn7}
\left[ \int_{\Om}  \vB \cdot\mathbf{b}   \dx \right]_{t=0}^{t=\tau}
=
\int_0^{\tau}\int_{\Om} \Big( \vB\cdot \pdt \mathbf{b}-
\mathbf{E} \cdot
\Curl \mathbf{b} \Big) \dxdt.
\eeq

Combining (\ref{lbn4})--(\ref{lbn7}) with the balance of total energy (\ref{lbb7}), we obtain the \emph{relative energy inequality} as follows
\[
\left[\mathcal{E}\Big((\vr,\vu,\vB)\,\Big|\,(r,\mathbf{U},\mathbf{b})\Big)\right]_{t=0}^{t=\tau}
+\int_0^{\tau}\int_{\Om} \Big(  \mathbb{S}(\Grad\vu-\Grad \mathbf{U}):(\Grad \vu-\Grad \mathbf{U})
\Big)\dxdt
\]
\[
+ \int_0^{\tau}\int_{\Om} \left|  \mathbf{j} -\Curl \mathbf{b}\right|^2 \dxdt
+\int_{\overline{\Om} }{\rm d}\mathfrak{D}(\tau)
+ \int_0^{\tau}
  \int_{\overline{\Om}}  {\rm d}\mathfrak{C}
\]
\[
\leq -\int_0^{\tau}\int_{\Om} \Big ( \vr\vu\cdot \pdt \mathbf{U} + \vr\vu\otimes \vu: \Grad \mathbf{U}+p(\vr)\Div \mathbf{U}\Big ) \dxdt
\]
\[
+\int_0^{\tau}\int_{\Om} \Big ( \vr \mathbf{U}\cdot \pdt \mathbf{U}+ \vr\vu \cdot \Grad \mathbf{U} \cdot \mathbf{U}\Big )\dxdt
+\int_0^{\tau}\int_{\Om} \mathbb{S} (\Grad \mathbf{U}):
\left(
\Grad \mathbf{U}-\Grad \vu
\right)    \dxdt
\]
\[
+\int_0^{\tau}\int_{\Om}
\left[
\left( 1-\f{\vr}{r} \right)p'(r)\pdt r
-\vr\vu\cdot \f{p'(r)}{r} \Grad r
\right]
\dxdt
\]

\[
+ \int_0^{\tau}\int_{\Om}
\pdt \mathbf{b}  \cdot
(
\mathbf{b}-
  \vB
)
\dxdt
+
\int_0^{\tau}\int_{\Om}
\Curl \mathbf{b} \cdot (
\Curl \mathbf{b}-  \mathbf{j}
) \dxdt
\]
\[
-
\int_0^{\tau}\int_{\Om}
(\vu \times \vB)\cdot \Curl \mathbf{b} \dxdt
-
\int_0^{\tau}\int_{\Om}
(
\mathbf{j} \times
\mathbf{B}
) \cdot
\mathbf{U} \dxdt
\]
\beq\label{lbn8}
-
 \int_0^{\tau}\int_{\overline{\Om}} \Grad \mathbf{U} :{\rm d} \mu_c(t)\dt
 -
 \int_0^{\tau}\int_{\overline{\Om}} \mathbf{U} \cdot {\rm d} \mu_B(t)\dt,
\eeq
where we have used the identity  $\vE= \vj-\vu\times \vB$.

\subsection{Weak--strong uniqueness principle}\label{weak_str}

Let $(\widetilde{\vr},\widetilde{\vu},\widetilde{\vB},\widetilde{\mathbf{E}})$ be a classical solution to \eqref{pde}--\eqref{ini_c} starting from the smooth initial data $(\vr_0,\vu_0,\vB_0)$ with strictly positive $\vr_0$ and $\Div \vB_0=0$. Let $(\vr,\vu,\vB,\vE)$ be a dissipative weak solution to \eqref{pde}--(\ref{BCs}) emanating from the same initial data. It follows from (\ref{lbn8}) that
\[
\mathcal{E}\Big((\vr,\vu,\vB)\,\Big|\,(\widetilde{\vr},\widetilde{\vu},\widetilde{\vB})\Big)(\tau)
+\int_0^{\tau}\int_{\Om} \Big(  \mathbb{S}(\Grad\vu-\Grad \widetilde{\vu}):(\Grad \vu-\Grad \widetilde{\vu})
\Big)\dxdt
\]
\[
+\int_0^{\tau}\int_{\Om} \left|    \mathbf{j} -\Curl \widetilde{\vB}\right|^2\dxdt
+\int_{\overline{\Om} }{\rm d}\mathfrak{D}(\tau)
+ \int_0^{\tau}
  \int_{\overline{\Om}}  {\rm d}\mathfrak{C}
\]
\[
\leq -\int_0^{\tau}\int_{\Om} \Big ( \vr\vu\cdot \pdt \widetilde{\vu} + \vr\vu\otimes \vu: \Grad \widetilde{\vu}+p(\vr)\Div \widetilde{\vu}\Big ) \dxdt
\]
\[
+\int_0^{\tau}\int_{\Om} \Big ( \vr\widetilde{\vu}\cdot \pdt \widetilde{\vu}+ \vr\vu\cdot \widetilde{\vu}\cdot \Grad \widetilde{\vu} \Big )\dxdt
+\int_0^{\tau}\int_{\Om} \mathbb{S} (\Grad \widetilde{\vu}):
\left(
\Grad \widetilde{\vu}-\Grad \vu
\right)    \dxdt
\]
\[
+\int_0^{\tau}\int_{\Om}
\left[
\left( 1-\f{\vr}{\widetilde{\vr}} \right)p'(\widetilde{\vr})\pdt \widetilde{\vr}
-\vr\vu\cdot \f{p'(\widetilde{\vr})}{\widetilde{\vr}} \Grad \widetilde{\vr}
\right]
\dxdt
\]
\[
+ \int_0^{\tau}\int_{\Om}
\pdt \widetilde{\mathbf{B} } \cdot
(
\widetilde{\mathbf{B} }-
 \vB
)
\dxdt
+
\int_0^{\tau}\int_{\Om}
\Curl \widetilde{\mathbf{B} } \cdot (
\Curl \widetilde{\mathbf{B} }-  \mathbf{j}
)\dxdt
\]
\[
-
\int_0^{\tau}\int_{\Om}
(\vu \times \vB)\cdot \Curl \widetilde{\mathbf{B} } \dxdt
-
\int_0^{\tau}\int_{\Om}
(
\mathbf{j} \times
\mathbf{B}
) \cdot
\widetilde{\mathbf{u}} \dxdt
\]
\beq\label{lbn9}
-
 \int_0^{\tau}\int_{\overline{\Om}} \Grad \widetilde{\vu} :{\rm d} \mu_c(t)\dt
 -
 \int_0^{\tau}\int_{\overline{\Om}} \widetilde{\vu} \cdot {\rm d} \mu_B(t)\dt.
\eeq
By the compatibility conditions (\ref{lbb8})-(\ref{lbb8-2}),
\beq\label{lbn10}
\left|
 \int_0^{\tau}\int_{\overline{\Om}} \Grad \widetilde{\vu} :{\rm d} \mu_c(t)\dt
 +
 \int_0^{\tau}\int_{\overline{\Om}} \widetilde{\vu} \cdot {\rm d} \mu_B(t)\dt
\right|
\lesssim
 \epsilon \int_0^{\tau}
  \int_{\overline{\Om}}  {\rm d}\mathfrak{C}
  +\f{C}{\epsilon} \int_0^{\tau} \int_{\overline{\Om}}{\rm d}\mathfrak{D}(t)\dt,
\eeq
where $\epsilon>0$ is small enough. Observing that $(\widetilde{\vr},\widetilde{\vu},\widetilde{\vB},\widetilde{\mathbf{E}})$ solves \eqref{pde}--\eqref{ini_c} in the classical sense, i.e.,
\begin{equation}\label{lbn11}
\left\{\begin{aligned}
& \pdt \widetilde{\vr}+\Div (\widetilde{\vr}\, \widetilde{\vu})=0,\\
& \widetilde{\vr} \left(\pdt \widetilde{\vu}+\widetilde{\vu}\cdot \Grad \widetilde{\vu} \right)
+\Grad p(\widetilde{\vr})=\Div \mathbb{S}(\Grad \widetilde{\vu})+ \widetilde{\mathbf{j}}\times \widetilde{\vB},\\
& \widetilde{\mathbf{j}}=\Curl \widetilde{\vB},\\
& \pdt \widetilde{\vB}+\Curl  \widetilde{\mathbf{E}}=0 ,\\
& \Div \widetilde{\vB}=0, \\
\end{aligned}\right.
\end{equation}
with $\widetilde{\mathbf{j}}=\widetilde{\mathbf{E}}+\widetilde{\vu}\times \widetilde{\vB}$, we furthermore simplify the right-hand side of (\ref{lbn9}). Since this process is straightforward and similar to the compressible Navier--Stokes system (see \cite{FGSGW}), the details are omitted. Thus, 
\[
\mathcal{E}\Big((\vr,\vu,\vB)\,\Big|\,(\widetilde{\vr},\widetilde{\vu},\widetilde{\vB})\Big)(\tau)
+\int_0^{\tau}\int_{\Om} \Big(  \mathbb{S}(\Grad\vu-\Grad \widetilde{\vu}):(\Grad \vu-\Grad \widetilde{\vu})
\Big)\dxdt
\]
\[
+\int_0^{\tau}\int_{\Om} \left|    \mathbf{j} -\Curl \widetilde{\vB}\right|^2\dxdt
+\int_{\overline{\Om} }{\rm d}\mathfrak{D}(\tau)
+ \int_0^{\tau}
  \int_{\overline{\Om}}  {\rm d}\mathfrak{C}
\]
\[
\lesssim \int_0^{\tau}\int_{\Om}
\vr(\vu-\widetilde{\vu}) \cdot \Grad \widetilde{\vu}\cdot (\widetilde{\vu}-\vu)
\dxdt
+\int_0^{\tau}\int_{\Om} \mathbb{S} (\Grad \widetilde{\vu}):
\left(
\Grad \widetilde{\vu}-\Grad \vu
\right)    \dxdt
\]
\[
+\int_0^{\tau}\int_{\Om}
\vr(\widetilde{\vu}-\vu)    \cdot
\f{1}{\widetilde{\vr}} \Div \mathbb{S}(\Grad \widetilde{\vu})
\dxdt
-\int_0^{\tau}\int_{\Om}
p(\vr)-p(\widetilde{\vr}) -p'(\widetilde{\vr}) (\vr-\widetilde{\vr})  \Div \widetilde{\vu}
\dxdt
\]
\[
+\int_0^{\tau}\int_{\Om}
\vr(\widetilde{\vu}-\vu)   \cdot
\f{1}{\widetilde{\vr}} \left(\Curl \widetilde{\vB}\times \widetilde{\vB}\right)
\dxdt
\]
\[
+ \int_0^{\tau}\int_{\Om}
\pdt \widetilde{\mathbf{B} } \cdot
(
\widetilde{\mathbf{B} }-
  \vB
)
\dxdt
+
\int_0^{\tau}\int_{\Om}
\Curl \widetilde{\mathbf{B} } \cdot (
\Curl \widetilde{\mathbf{B} }- \mathbf{j}
)\dxdt
\]
\beq\label{lbn12}
-
\int_0^{\tau}\int_{\Om}
(\vu \times \vB) \cdot \Curl \widetilde{\mathbf{B} } \dxdt
-
\int_0^{\tau}\int_{\Om}
(
\mathbf{j} \times
\mathbf{B}
) \cdot
\widetilde{\mathbf{u}} \dxdt
+\int_0^{\tau} \int_{\overline{\Om}}{\rm d}\mathfrak{D}(t)\dt.
\eeq
Notice that the integrals involved with the magnetic field and the current density may be rewritten as, using (\ref{lbn11}),
\[
\int_0^{\tau}\int_{\Om}
\vr(\widetilde{\vu}-\vu)    \cdot
\f{1}{\widetilde{\vr}} \left(\Curl \widetilde{\vB}\times \widetilde{\vB}\right)
\dxdt
\]
\[
+ \int_0^{\tau}\int_{\Om}
\pdt \widetilde{\mathbf{B} } \cdot
(
\widetilde{\mathbf{B} }-
  \vB
)
\dxdt
+
\int_0^{\tau}\int_{\Om}
\Curl \widetilde{\mathbf{B} } \cdot (
\Curl \widetilde{\mathbf{B} }-  \mathbf{j}
)\dxdt
\]
\[
-
\int_0^{\tau}\int_{\Om}
(\vu \times \vB )\cdot \Curl \widetilde{\mathbf{B} } \dxdt
-
\int_0^{\tau}\int_{\Om}
(
\mathbf{j} \times
\mathbf{B} )
 \cdot
\widetilde{\mathbf{u}} \dxdt
\]
\[
=\int_0^{\tau}\int_{\Om}
(\vr-\widetilde{\vr})(\widetilde{\vu}-\vu)   \cdot
\f{1}{\widetilde{\vr}}\left( \Curl \widetilde{\vB}\times \widetilde{\vB} \right)
\dxdt
+\int_0^{\tau}\int_{\Om}
(\widetilde{\vu}-\vu)   \cdot
 \left( \Curl \widetilde{\vB}\times \widetilde{\vB} \right)
\dxdt
\]
\[
+\int_0^{\tau}\int_{\Om}
\Big(
\Curl  (\widetilde{\vu} \times \widetilde{\vB}) \cdot
(
\widetilde{\vB}-\vB
)
-(\vu \times \vB)\cdot \Curl  \widetilde{\vB}
\Big)
\dxdt
+\int_0^{\tau}\int_{\Om}
 \mathbf{j} \cdot
(\widetilde{\vu}\times \vB )
\dxdt
\]
\[
=\int_0^{\tau}\int_{\Om}
(\vr-\widetilde{\vr})(\widetilde{\vu}-\vu)    \cdot
\f{1}{\widetilde{\vr}}\left( \Curl \widetilde{\vB}\times \widetilde{\vB} \right)
\dxdt
 +
 \int_0^{\tau}\int_{\Om}
 \Big(  \mathbf{j} -\Curl \widetilde{\vB}\Big)
   \cdot
   \Big( \widetilde{\vu} \times (\vB-\widetilde{\vB})
   \Big )
   \dxdt
\]
\beq\label{lbn13}
 +\int_0^{\tau}\int_{\Om}
  \Curl  \widetilde{\vB} \cdot
 (\vu-\widetilde{\vu})\times(\widetilde{\vB}-\vB)
  \dxdt.
\eeq
Moreover, it holds that
\[
\left|
\int_0^{\tau}\int_{\Om}
 \Big(  \mathbf{j} -\Curl \widetilde{\vB}\Big)
   \cdot
   \Big( \widetilde{\vu} \times (\vB-\widetilde{\vB})
   \Big )
   \dxdt
\right|
\]
\beq\label{lbn14}
\lesssim
\epsilon \int_0^{\tau}\int_{\Om} \left| \mathbf{j} -\Curl \widetilde{\vB}\right|^2\dxdt
+\f{C}{\epsilon} \int_0^{\tau}\int_{\Om}
|\vB-\widetilde{\vB}|^2
\dxdt;
\eeq
\[
\left|
\int_0^{\tau}\int_{\Om}
  \Curl  \widetilde{\vB} \cdot
 (\vu-\widetilde{\vu})\times(\widetilde{\vB}-\vB)
  \dxdt
\right|
\]
\beq\label{lbn15}
\lesssim
\epsilon \int_0^{\tau}\int_{\Om}  |\vu-\widetilde{\vu}|^2 \dxdt
+\f{C}{\epsilon} \int_0^{\tau}\int_{\Om}
|\vB-\widetilde{\vB}|^2
\dxdt.
\eeq
Due to the generalized Korn-type inequality,
\beq\label{lbn16}
\int_0^{\tau}\int_{\Om}  |\vu-\widetilde{\vu}|^2 \dxdt
\lesssim
\int_0^{\tau}\int_{\Om} \Big(  \mathbb{S}(\Grad\vu-\Grad \widetilde{\vu}):(\Grad \vu-\Grad \widetilde{\vu})
\Big)\dxdt.
\eeq
It then follows from the isentropic law of pressure that
\[
\left|
\int_0^{\tau}\int_{\Om}
\Big( p(\vr)-p(\widetilde{\vr}) -p'(\widetilde{\vr}) (\vr-\widetilde{\vr}) \Big) \Div \widetilde{\vu}
\dxdt
\right|
\]
\beq\label{lbn17}
\lesssim
\int_0^{\tau}\int_{\Om}
\Big( H(\vr)-H(\widetilde{\vr}) -H'(\widetilde{\vr}) (\vr-\widetilde{\vr}) \Big)
\dxdt.
\eeq

Consequently, combining (\ref{lbn12})--(\ref{lbn17}) and choosing $\epsilon>0$ sufficiently small gives rise to
\[
\mathcal{E}\Big((\vr,\vu,\vB)\,\Big|\,(\widetilde{\vr},\widetilde{\vu},\widetilde{\vB})\Big)(\tau)
+\int_0^{\tau}\int_{\Om} \Big(  \mathbb{S}(\Grad\vu-\Grad \widetilde{\vu}):(\Grad \vu-\Grad \widetilde{\vu})
\Big)\dxdt
\]
\[
+\int_0^{\tau}\int_{\Om} \left|    \mathbf{j} -\Curl \widetilde{\vB}\right|^2\dxdt
+\int_{\overline{\Om} }{\rm d}\mathfrak{D}(\tau)
+ \int_0^{\tau}
  \int_{\overline{\Om}}  {\rm d}\mathfrak{C}
\]
\[
\lesssim
\int_0^{\tau}\int_{\Om} \mathbb{S} (\Grad \widetilde{\vu}):
\left(
\Grad \widetilde{\vu}-\Grad \vu
\right)    \dxdt
+\int_0^{\tau}\int_{\Om}
\vr(\widetilde{\vu}-\vu)    \cdot
\f{1}{\widetilde{\vr}} \Div \mathbb{S}(\Grad \widetilde{\vu})
\dxdt
\]
\[
+\int_0^{\tau}\int_{\Om}
(\vr-\widetilde{\vr})(\widetilde{\vu}-\vu)    \cdot
\f{1}{\widetilde{\vr}}\left( \Curl \widetilde{\vB}\times \widetilde{\vB} \right)
\dxdt
\]
\beq\label{lbn18}
+
\int_0^{\tau}
\mathcal{E}\Big((\vr,\vu,\vB)\,\Big|\,(\widetilde{\vr},\widetilde{\vu},\widetilde{\vB})\Big)(t)
\dt+
\int_0^{\tau} \int_{\overline{\Om}}{\rm d}\mathfrak{D}(t)\dt.
\eeq
Following \cite{FGSGW,FNS}, we estimate the remaining integrals as follows. Let $\chi$ be a cut-off function such that
\begin{equation}\label{lbn19}
\left\{\begin{aligned}
& \chi \in C _c^{\infty}((0,\infty)),\\
& 0\leq \chi \leq 1,\\
& \chi(\vr)=1 \text{ if }\vr \in [\inf{\widetilde{\vr}},\sup{\widetilde{\vr}}].\\
\end{aligned}\right.
\end{equation}
Thus we may write
\[
\left|
\int_0^{\tau}\int_{\Om}
(\vr-\widetilde{\vr})(\widetilde{\vu}-\vu)    \cdot
\f{1}{\widetilde{\vr}}\left( \Curl \widetilde{\vB}\times \widetilde{\vB} \right)
\dxdt
\right|
\]
\beq\label{lbn20}
\lesssim
\int_0^{\tau}\int_{\Om}
\chi(\vr)|\vr-\widetilde{\vr}||\widetilde{\vu}-\vu|
\dxdt
+
\int_0^{\tau}\int_{\Om}
(1-\chi(\vr))|\vr-\widetilde{\vr}||\widetilde{\vu}-\vu|
\dxdt.
\eeq
The first integral on the right-hand side of (\ref{lbn20}) is bounded through
\[
\int_0^{\tau}\int_{\Om}
\chi(\vr)|\vr-\widetilde{\vr}||\widetilde{\vu}-\vu|
\dxdt
\]
\[
\lesssim
\int_0^{\tau}\int_{\Om}
\f{1}{2}\vr|\widetilde{\vu}-\vu|^2
\dxdt
+
\int_0^{\tau}\int_{\Om}
 \f{1}{2}\f{\chi^2(\vr)}{\vr} |\vr-\widetilde{\vr}|^2
\dxdt
\]
\beq\label{lbn21}
\lesssim
\int_0^{\tau}
\mathcal{E}\Big((\vr,\vu,\vB)\,\Big|\,(\widetilde{\vr},\widetilde{\vu},\widetilde{\vB}) \Big)(t)
\dt.
\eeq
To estimate the second integral on the right-hand side of (\ref{lbn20}), we make a further decomposition, i.e.,
\[
1-\chi(\vr)=\chi_1(\vr)+\chi_2(\vr)
\]
such that
\[
\text{supp}\chi_1 \subset [0,\inf{\widetilde{\vr}}],\,\,
\text{supp}\chi_2 \subset [\sup{\widetilde{\vr}},\infty].
\]
It follows from (\ref{lbn16}) that
\[
\int_0^{\tau}\int_{\Om}
\chi_1(\vr)|\vr-\widetilde{\vr}||\widetilde{\vu}-\vu|
\dxdt
\]
\[
\lesssim
\epsilon
\int_0^{\tau}\int_{\Om}  |\vu-\widetilde{\vu}|^2 \dxdt
+\f{C}{\epsilon}
\int_0^{\tau}\int_{\Om}
\chi_1^2(\vr)|\vr-\widetilde{\vr}|^2
\dxdt
\]
\beq\label{lbn22}
\lesssim
 \epsilon\int_0^{\tau}\int_{\Om} \Big(  \mathbb{S}(\Grad\vu-\Grad \widetilde{\vu}):(\Grad \vu-\Grad \widetilde{\vu})
\Big)\dxdt
+\f{C}{\epsilon}
\int_0^{\tau}
\mathcal{E}\Big((\vr,\vu,\vB)\,\Big|\,(\widetilde{\vr},\widetilde{\vu},\widetilde{\vB}) \Big)(t)
\dt.
\eeq
Clearly,
\[
\int_0^{\tau}\int_{\Om}
\chi_2(\vr)|\vr-\widetilde{\vr}||\widetilde{\vu}-\vu|
\dxdt
\]
\[
\lesssim
\int_0^{\tau}\int_{\Om}
\chi_2(\vr)\vr|\widetilde{\vu}-\vu| ^2
\dxdt
+\int_0^{\tau}\int_{\Om}
\chi_2(\vr)\vr
\dxdt.
\]
\beq\label{lbn23}
\lesssim
\int_0^{\tau}
\mathcal{E}\Big((\vr,\vu,\vB)\,\Big|\,(\widetilde{\vr},\widetilde{\vu},\widetilde{\vB})\Big)(t)
\dt.
\eeq
Taking (\ref{lbn20})--(\ref{lbn23}) into account,
\[
\left|
\int_0^{\tau}\int_{\Om}
(\vr-\widetilde{\vr})(\widetilde{\vu}-\vu)   \cdot
\f{1}{\widetilde{\vr}}\left( \Curl \widetilde{\vB}\times \widetilde{\vB} \right)
\dxdt
\right|
\]
\beq\label{lbn24}
\lesssim
 \epsilon \int_0^{\tau}\int_{\Om} \Big(  \mathbb{S}(\Grad\vu-\Grad \widetilde{\vu}):(\Grad \vu-\Grad \widetilde{\vu})
\Big)\dxdt
+\f{C}{\epsilon}
\int_0^{\tau}
\mathcal{E}\Big((\vr,\vu,\vB)\,\Big|\,(\widetilde{\vr},\widetilde{\vu},\widetilde{\vB}) \Big)(t)
\dt.
\eeq
Finally, notice also that the first two integrals on the right-hand side of (\ref{lbn18}) are estimated as above upon observing that
\[
\int_0^{\tau}\int_{\Om} \mathbb{S} (\Grad \widetilde{\vu}):
\left(
\Grad \widetilde{\vu}-\Grad \vu
\right)    \dxdt
+\int_0^{\tau}\int_{\Om}
\vr(\widetilde{\vu}-\vu)   \cdot
\f{1}{\widetilde{\vr}} \Div \mathbb{S}(\Grad \widetilde{\vu})
\dxdt
\]
\beq\label{lbn25}
=
\int_0^{\tau}\int_{\Om}
(\vr-\widetilde{\vr})(\widetilde{\vu}-\vu)   \cdot
\f{1}{\widetilde{\vr}}
\Div \mathbb{S}(\Grad \widetilde{\vu})
\dxdt.
\eeq
Combining (\ref{lbn18}), (\ref{lbn24})--(\ref{lbn25}) and \emph{fixing} $\epsilon>0$ sufficiently small shows that
\[
\mathcal{E}\Big((\vr,\vu,\vB)\,\Big|\,(\widetilde{\vr},\widetilde{\vu},\widetilde{\vB}) \Big)(\tau)
+
\int_0^{\tau}\int_{\Om} \Big(  \mathbb{S}(\Grad\vu-\Grad \widetilde{\vu}):(\Grad \vu-\Grad \widetilde{\vu})
\Big)\dxdt
\]
\[
+\int_0^{\tau}\int_{\Om} \left|    \mathbf{j} -\Curl \widetilde{\vB}\right|^2\dxdt
+\int_{\overline{\Om} }{\rm d}\mathfrak{D}(\tau)
+ \int_0^{\tau}
  \int_{\overline{\Om}}  {\rm d}\mathfrak{C}
\]
\beq\label{lbn26}
\lesssim
\int_0^{\tau}
\mathcal{E}\Big((\vr,\vu,\vB)\,\Big|\,(\widetilde{\vr},\widetilde{\vu},\widetilde{\vB}) \Big)(t)
\dt+
\int_0^{\tau} \int_{\overline{\Om}}{\rm d}\mathfrak{D}(t)\dt.
\eeq
We conclude from Gronwall's inequality that
\[
\vr=\widetilde{\vr},\,\,\,\,\vu=\widetilde{\vu},\,\,\,\,\vB=\widetilde{\vB},\,\,\,\,\vE=\widetilde{\vE}\,\,\,\,
\text{      in    } (0,T)\times \Om,
\]
\[
\mu_c=\mathbf{0},\,\,\mu_B=0,\,\, \mathfrak{D}=\mathbf{0},\,\, \mathfrak{C}=0.
\]
The proof of Theorem \ref{Th1} is thus finished.           $\Box$

\section{Convergence}\label{sec_con}
In this section we first prove Theorem~\ref{Th2} for the convergence of a consistent approximation towards a DW solution and a classical solution. Further, we discuss its application in the convergence analysis of numerical solutions. 
\subsection{Convergence of a consistent approximation} 
\begin{proof}[Proof of Item 1 of Theorem~\ref{Th2}.]
The proof can be done in the same way as our previous work (\cite[Theorem 2.5]{LiShe_MHD1}), see also \cite{FeLMMiSh}. Here we list the main idea in the proof.  
\begin{itemize}
\item{\bf Step 1: convergence towards a DW solution.}
We deduce from the energy stability \eqref{es} for suitable subsequences that
\[
\vrh \rightarrow \vr \ \mbox{weakly-(*) in}\ L^\infty(0,T; L^\gamma(\Omega)), \vr \geq 0 ,
\]
\[
 \vuh,  \Pi_h{\vuh}\rightarrow \bfu \ \mbox{weakly in}\ L^2((0,T) \times \Omega; \R^d),\mbox{where}\,\,  \bu   \in L^2(0,T; W^{1,2}(\Omega;\R^d)), 
 \]
 \[
 \text{ and } \bu \in  L^2(0,T; W_0^{1,2}(\Omega;\R^d)) \text{ in case of no-slip boundary conditions},
\]
\[
 \Gradh \vuh \to \Grad \bu \ \mbox{weakly in} \ L^2((0,T) \times \Omega;
\R^{d \times d}), 
\]
\[
 \vrh  \Pi_h{\vuh}  \rightarrow \Ov{\vr \vu}  \,\, \mbox{weakly-(*) in}\ L^\infty(0,T; L^{\frac{2\gamma}{\gamma + 1}}(\Omega; \R^d)),
\]
\[
\vBh \rightarrow \vB \ \mbox{weakly-(*) in}\ L^\infty(0,T; L^2(\Omega;\R^d)),
\]
\[
\vEh \rightarrow  \vE \ \mbox{weakly-(*) in}\ L^2(0,T; L^{\frac32}(\Omega;\R^d)),
\]
\[
\vrh \Pi_h{\vuh} \otimes \vuh + p(\vrh) \mathbb{I} \rightarrow  \Ov{ 1_{\vr>0} \frac{\vm \otimes \vm}{\vr} + p(\vr) \mathbb{I} }
 \ \mbox{weakly-(*) in} \ 
  L^\infty(0,T; \mathcal{M}(\overline{\Omega};\R^{d\times d}_{sym})),
\]
\[
\vjh \times \vBh  \rightarrow  \Ov{ \vj \times \vB }
 \ \mbox{weakly-(*) in} \   L^2(0,T; \mathcal{M}(\overline{\Omega};\R^d)) ,
 \]
 \[
 \S(\Gradh \vuh) : \Gradh \vuh + |\vjh|^2   \rightarrow \overline{\mathbb{S}(\Grad \vu):\Grad \vu + |\vj|^2} \mbox{ in }\mathcal{M}^{+}([0,T]\times \overline{\Om}),
% \\
%& |\vjh|^2  \to \overline{ |\vj|^2 }\mbox{ in }\mathcal{M}^{+}([0,T]\times \overline{\Om}),
\]
\[
 \frac{1}{2} \vrh \abs{\Pi_h  \vuh}^2  + H(\vrh) +\frac{1}{2} \abs{\vBh}^2 \rightarrow \overline{  \f{1}{2}\vr |\vu|^2+H(\vr)+\f{1}{2}|\mathbf{B}|^2 }\mbox{  weakly-(*) in }L^{\infty}(0,T; \mathcal{M}^{+}(\overline{\Om})),
\]
where $\vm =\vr \vu$.  It holds analogous to \cite[Lemma 3.7]{AnFeNo} that 
\[ \Ov{\vr \vu} = \vr \vu, \quad \Ov{ \vu \times \vB} =  \vu \times \vB
\]
Moreover, we define the following concentration/oscillation defect measures:
\[
\mu_c:=\overline{      1_{\vr>0}\f{\mathbf{m}\otimes \mathbf{m}}{\vr} +p(\vr)\mathbb{I}    }
-\left(
 1_{\vr>0}\f{\mathbf{m}\otimes \mathbf{m}}{\vr} +p(\vr)\mathbb{I}
\right),
\]
\[
\mu_B:=\overline{ \vj \times \vB }
-(  \vj \times \vB ),
\]
\[
\mathfrak{D}:=\overline{  \f{1}{2}\vr |\vu|^2+H(\vr)+\f{1}{2}|\mathbf{B}|^2 }
-\left(\f{1}{2}\vr |\vu|^2+H(\vr)+\f{1}{2}|\mathbf{B}|^2\right),
\]
\[
\mathfrak{C}=\overline{\mathbb{S}(\Grad \vu):\Grad \vu  +   |\vj|^2 } - \Big( \mathbb{S}(\Grad \vu):\Grad \vu+  |\vj|^2  \Big); 
\]
whence it is easy to see that
\begin{equation}\label{comp}
|\mu_c|  \aleq  \mathfrak{D}, \quad |\mu_B|  \aleq \frac{1}{\epsilon}   \mathfrak{D} +  \epsilon  \mathfrak{C} ,  \, \text{ for any }\epsilon >0. 
\end{equation}

Consequently, passing to the limit $h \to 0$ for the consistency formulation \eqref{cP1}--\eqref{cP4} and the energy stability \eqref{es}, we find that the resulting limit formulae exactly coincide with \eqref{lbb3} -- \eqref{lbb7} in the definition of DW solution. Moreover, the compatibility conditions \eqref{lbb8} hold due to \eqref{comp}. Item 1 of Theorem~\ref{Th2} is thus proved.
\item{\bf Step 2: convergence towards a classical solution.}  
Combining Theorem~\ref{Th1} and Item 1 of Theorem~\ref{Th2}  we immediately get Item 2 of Theorem~\ref{Th2}, that is the convergence of the consistent approximations towards the classical solution emanating from the same initial data.
\end{itemize}
\end{proof}

%\begin{Remark}
From Theorem~\ref{Th2} we conclude that any numerical solution converges to a DW solution and a classical solution (on its lifespan) if the numerical solution is a consistent approximation. 
%Recalling the definition of the consistent approximation stated in Definition~\ref{def_ca}, we realize that the convergence of a numerical solution is equivalent to show that it is stable and consistent in general. 
%\end{Remark}
In the rest of this section, we show  the application of Theorem~\ref{Th2} in the convergence analysis of numerical solutions by two examples. 

\subsection{Example I}
In this example we construct a numerical solution that falls in the class of consistent approximation. 
To this end, we propose a mixed finite volume (FV) -- finite element (FE) method fine adapted from the compressible Navier-Stokes solver of Karper~\cite{Karper} and the electric-magnetic solver of Hu et al.~\cite{HMX}. 
To begin, we introduce the necessary notations. 
\paragraph{Mesh.} Let $\Oh$ be a regular triangulation of the bounded domain $\Omega$,
$\mcE$ be the set of all $(d-1)$-dimensional faces, $\mcEe=\mcE \cap \pd \Omega$ be the exterior faces, $\mcEi=\mcE \setminus \mcEe$ be the interior faces, and $\facesK$ be the set of all faces of an arbitrary element $K$.
We denote $\sigma=K|L$ as the common face of two neighbouring elements $K$ and $L$.
Further, we denote $\bfn_\sigma$ as the outer normal of a face $\sigma \in \faces$ and $\bfn_{\sigma,K}$ as the unit normal vector pointing outwards $K$ if $\sigma \in \facesK$.
The size of the mesh (maximal diameter of all elements) is supposed to be a positive parameter $h<1$.

We suppose $\TS \approx h$ and denote
 $t^k= k\TS$ for $k=1,\ldots,N_T(=T/\TS)$.

\paragraph{Function spaces.}\label{sec:spaces}
First, we define a discrete function space of piecewise constants  $Q_h$, 
and denote $\CR, \RT, \ND$ as the piecewise linear Crouzeix--Raviart,  $\Hcurl$-N\'{e}d\'{e}lec and  $\Hdiv$-N\'{e}d\'{e}lec type element spaces, respectively. These function spaces read
 \begin{equation*}
  Q_h \equiv \{ v \in L^2(\Omega): \ v \vert_K \in \mcP^1_0, \ \forall\ K \in \Oh \},
 \end{equation*}
 \begin{equation*}
 \begin{aligned}
 V_h \equiv \Big\{ \vv \in L^2(\Omega):\;  \vv \vert_K \in \mcP^d_1(K), \forall\,  K \in \Oh; \, 
\intG{ \jump{\vv}}  =0, \forall\,  \sigma \in \mcEi
 \Big\},
 \end{aligned}\end{equation*}
  \begin{equation*}\begin{aligned}
  \RT \equiv \Big\{ \vv, \Div \vv \in L^2(\Omega): \;  \vv \vert_K \in \mcP_0^d\oplus\mcP_0^1 \bfx, \; \forall\;  K \in \Oh;\;
  \intG{ \jump{\vv\cdot \bfn} } =0, \; \forall\;  \sigma \in \mcEi   \Big\}, 
 \end{aligned}\end{equation*}
 \begin{equation*}\begin{aligned}
  \ND \equiv \Big\{ \vv, \Curl \vv \in L^2(\Omega): \;  \vv \vert_K \in \mcP_0^d\oplus\mcP_0^1 \bfx, \; \forall\;  K \in \Oh;\, 
  \intG {\jump{\vv \times \bfn}} =0, \; \forall\;  \sigma \in \mcEi
   \Big\}, 
 \end{aligned}\end{equation*}
 where $\mcP^m_n$ denotes the space of polynomials of degree not greater than $n$ for $m$-dimensional vector valued functions ($m=1$ for scalar functions). 
Note that $\RT \subset \HcurlO$ and $\ND \subset \HdivO$. 
The interpolation operators associated to the function spaces $\CR$, $\RT$, and $\ND$ are given by
\begin{equation}\label{proj1}
\begin{aligned}
\PiV :\ W^{1,2}(\Omega) \rightarrow  \CR, \quad
\PiR :\ W^{1,2}(\Omega) \rightarrow \RT, \quad
\PiN :\ W^{1,2}(\Omega) \rightarrow \ND.
\end{aligned}
\end{equation}
%%%%%%%%%%%%%%%%%%%%%%%%%%%%%%%%%%%%%%%
Further, we denote $\CRz = \{ \vv \in \CR : \,   \intG \vv  =0, \forall\,  \sigma \in \mcEe  \}$, 
$  \RTz = \{ \vv \in \RT : \,     \intG {\vv \cdot \bfn} =0, \; \forall\;  \sigma \in \mcEe  \}$ and 
$ \NDz = \{ \vv \in \ND : \,   \intG {\vv\times \bfn} =0, \; \forall\;  \sigma \in \mcEe \}$.

Next, we denote a generic discrete function $r_h$ at time $t^k$ by $r_h^k$ and define $L_{\TS}(0,T;Y)$ as the space of piecewise constant in time functions, i.e., for any $v_h \in L_{\TS}(0,T;Y)$ it means
\[ v_h(t,\cdot) =v_h^0 \mbox{ for } t \leq 0,\ v_h(t,\cdot)=v_h^k \mbox{ for } t\in ((k-1)\TS,k\TS],\ k=1,2,\ldots,N_T,
\]
where $Y \in \{Q_h, \CR, \RT,\ND\}$.
Then we approximate the unknown variables in the following functional spaces
\[ \vrh \in L_{\TS}(0,T;Q_h), \;
\vuh \in  L_{\TS}(0,T;\CR),\;
\vBh \in  L_{\TS}(0,T;\RT),\;
\vEh \in  L_{\TS}(0,T;\ND).
\]
Further, we define for simplicity $\FS=Q_h \times \CR \times \RT \times \ND$ and  $\FSz=Q_h \times \CRz \times \RTz \times \NDz $.

\paragraph{Some discrete operators.}
We define the discrete time derivative by the backward Euler method
\[
 D_t v_h = \frac{v_h (t) - v_h^{\triangleleft}(t)}{\TS}  \mbox{ for } t\in(0,T) \quad \mbox{ with } \quad   v_h^{\triangleleft}(t) =  v_h(t - \Delta t) .
\]
For any piecewise continuous function $f$, we define its trace on a generic edge %$\sigma \in \mcE$
as
\begin{equation*}
\begin{aligned}
f^{\rm in}|_{\sigma} = \lim_{\delta \rightarrow 0^+} f(\xx -\delta \nG ), \ \forall\  \sigma \in \mcE,
\qquad
f^{\rm out}|_{\sigma} = \lim_{\delta \rightarrow 0^+} f(\xx +\delta \nG ), \ \forall\  \sigma \in \mcEi.
\end{aligned}
\end{equation*}
Note that $f^{\rm out}|_{\mcEe}$ is determined by the boundary condition.
Further, we define the jump and average operators at an edge $\sigma \in \mcE$ as
\begin{equation}\label{op_diff}
\jump{f}_{\sigma} = f^{\rm out} - f^{\rm in} \mbox{ and }
\avg{f} = \frac{f^{\rm out} + f^{\rm in}}{2} ,
 %\ \forall \; \sigma \in \mcE,
%\quad  \left. \avc{f}\right|_K = \frac{1}{\abs{K}} \intK f  \ \forall\ K \in \Oh.
\end{equation}
respectively, and denote $\Pi_h = \PiQ$ as the element-wise constant projection, where 
\begin{equation}\label{op_avgk}
\PiQ: L^1(\Omega) \to Q_h. \quad   \left. \PiQ f\right|_K =  \left. \avc{f}\right|_K \equiv \frac{1}{\abs{K}} \intK f   , \ \forall\ K \in \Oh.
\end{equation}
Next,  we introduce the upwind flux for any function $r_h \in Q_h$ at a generic face  $\sigma \in \facesint$
\begin{align*}\label{Up}
\Up [r_h, \vuh]   =r_h^{\rm up}\us
=r_h^{\rm in} [\us]^+ + r_h^{\rm out} [\us]^-,
%= \avg{r} \ \Ov{\vv} \cdot \vc{n} - \frac{1}{2} |\Ov{\vv} \cdot \vc{n}| \jump{r},
\end{align*}
where $\vuh \in \CR$ is the velocity field and %, $\us = \frac{1}{|\sigma|} \intG{\vu} \cdot \vc{n}_\sigma$ and
\begin{equation*}
\us = \frac{1}{|\sigma|} \intG{\vuh} \cdot \vc{n}_\sigma, \quad
[f]^{\pm} = \frac{f \pm |f| }{2} \quad \mbox{and} \quad
r_h^{\rm up} =
\begin{cases}
 r_h^{\rm in} & \mbox{if } \ \us \geq 0, \\
r_h^{\rm out} & \mbox{if } \ \us < 0.
\end{cases}
\end{equation*}
Furthermore, we consider a diffusive numerical flux function of the following form for $\eps>-1$
\begin{equation}\label{num_flux}
\begin{aligned}
\Fup(r_h,\vuh)
=\Up[r_h, \vuh] -  h^\eps \jump{ r_h }
.% = \avg{r} \ \Ov{\vv} \cdot \vc{n}  - \left( h^\eps +   \frac12 \abs{\Ov{\vv} \cdot \vc{n}} \right) \jump{r} .
\end{aligned}
 \end{equation}
It is easy to check for any $\vrh \in Q_h$ and $\vuh \in \CR$ that
\begin{equation}\label{fluxes}
\Fup(\vrh \avc{\vuh},\vuh) \jump{\avc{\vuh}} -  \Fup(\vrh ,\vuh) \jump{ \frac{|\avc{\vuh}|^2}{2}}
= - \sum_{ \sigma \in \facesint } \intG{ \left(\frac12 \vrh^{\up} \abs{\us}
+h^\eps \avg{\vrh}   \right)\abs{\jump{\avc{\vuh}}}^2 }  .
%- h^\eps \sum_{ \sigma \in \facesint } \intG{ \avg{\vr} \abs{\jump{\avc{\vu}}}^2 }
\end{equation}

For simplicity, we denote
$\co{a}{b} =\left[\min(a,b), \max(a,b) \right]$
and write
$ a \aleq b \mbox{ if } a \le cb$ if $c$ is a positive constant that is independent of the mesh size and time step used in the scheme.
We shall frequently use the abbreviation $\norm{\cdot}_{L^p}$ and $\norm{\cdot}_{L^pL^q}$  for $\norm{\cdot}_{L^p(\Omega)}$ and $\norm{\cdot}_{L^p\left(0,T;L^q(\Omega)\right)}$, respectively.

\paragraph{The numerical method.}
Using the above notation we propose a novel mixed FV -- FE method to approximate system \eqref{pde}--\eqref{ini_c}.

\begin{tcolorbox}
{\bf Scheme-I.} Let the pressure satisfy \eqref{plaw} with $\gamma>1$. 
Given the initial values  \eqref{ini_c} we set $(\vrh^0,\vuh^0, \vBh^0) =(\PiQ \vr_0, \PiV \vu_0, \PiN \vB_0)$ and  seek $(\vrh,\vuh, \vBh, \vEh) \in L_{\TS}(0, T;\FSz) $ such that
\begin{subequations}\label{scheme}
\begin{equation}\label{scheme_D}
\intO{ D_t \vrh  \phi_h } - \sum_{ \sigma \in \facesint } \intG{  \Fup(\vrh  ,\vuh  )
\jump{\phi_h}   } = 0 \quad \mbox{for all } \phi_h \in Q_h;
\end{equation}
\begin{equation}\label{scheme_M}
\begin{aligned}
  \intO{ D_t (\vrh  \auh) \cdot  \vh   } -
  \sum_{ \sigma \in \facesint } \intG{  \Fup(\vrh   \auh,\vuh  ) \cdot
\jump{\avh}   }
+ \mu \intO{  \Gradh \vuh  : \Gradh \vh  } \\ +  \intO{( \nu \Divh \vuh  - p_h ) \Divh \vh  }
 - \intO{ ( \vjh   \times  \vBh^\triangleleft ) \cdot \vh}
 =0  \quad \mbox{for all } \vh \in \CR ;
\end{aligned}
\end{equation}
\begin{equation}\label{scheme_B}
\intO{ \left( D_t \vBh \cdot \vCh + \vEh \cdot \Curlh \vCh \right)}=0
 \quad \mbox{for all } \vCh \in \RT .
\end{equation}
\begin{equation}\label{scheme_E}
\intO{\left( \vjh \cdot \bfpsi_h -  \vBh \cdot \Curlh \bfpsi_h \right) } =0  \quad \mbox{for all } \bfpsi_h \in \ND ;
\end{equation}
\end{subequations}
where $\nu = \frac{d-2}{d}\mu+\lambda$, $ \vjh = \vEh  + \vuh \times \vBh^\triangleleft$, and 
the discrete operators $\Divh, \Gradh$ and $\Curlh$ are the same as the continuous case on each element. 
%Moreover, the artificial diffusion parameter $\eps$ follows
%\begin{equation}\label{eps}
%\eps >0 \mbox{ if } \gamma \geq 2 \quad \mbox{ and }\quad   \eps \in(0, 2 \gamma-1 -d/3) \mbox{ if } \gamma \in(4d /(1+3d),2).
%\end{equation}
\end{tcolorbox}

Before going to the proof of the main results, we show some nice properties of the scheme.
\begin{Lemma}[Existence, mass conservation, renormalized continuity, positivity, divergence free]\label{lem_b}
\hfill
\begin{enumerate}
\item {\bf Existence of a numerical solution.}
There exists at least one solution to  {\bf Scheme-I}  \eqref{scheme}.
\item {\bf Mass conservation.} 
{\bf Scheme-I} preserves the conservation of the total mass.
\[
\intO{ \vrh (t) } = \intO{ \vrh (0) } = \intO{ \vr_0 } , \quad \forall\;  t \in[0,T].
\]
\item {\bf Renormalized continuity equation.}
Let $(\vrh,\vuh)\in  Q_h \times \CR$ satisfy the discrete continuity equation \eqref{scheme_D} and $b=b(\vr)\in C^2(0,\infty)$. Then the discrete continuity equation \eqref{scheme_D} can be renormalized in the sense that
\begin{equation}\label{r1}
\begin{aligned}
&\intO{\left(D_t b(\vrh ) -\big(\vrh  b'(\vrh )-b(\vrh )\big) \Divh \vuh  \right) }
\\&=
- \frac{\TS}2 \intO{ b''(\xi)|D_t \vrh |^2  }
- \sum_{ \sigma \in \facesint } \intG{ b''(\zeta) \jump{  \vrh  } ^2 \left(h^\eps  +  \frac12 | \us | \right) }.
\end{aligned}
\end{equation}
where $\xi \in \co{\vrh^{\triangleleft}}{\vrh }$ and  $\zeta \in \co{\vrh^{\rm in}}{\vrh^{\rm out}}$.
\item {\bf Positivity of the density.} Let $\vr_0>0$. Then any solution of {\bf Scheme-I}  \eqref{scheme} satisfies $\vrh(t)>0$ for $t\in [0,T]$.
\item {\bf Divergence free of magnetic field.}
Let $\Div \vB_0 =0$. Then {\bf Scheme-I} preserves divergence free of magnetic field exactly, meaning that
$ \Divh \vBh(t)  =0$ for all $t \in [0,T]$. 
\end{enumerate}
\end{Lemma}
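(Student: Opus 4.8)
The plan is to establish the five items in the order (2), (3), (4), (5), (1), noting that items (2)--(5) are properties of \emph{any} solution of \eqref{scheme} and that the a priori bounds needed for the existence proof (1) are built from them. \emph{Mass conservation} (2) is immediate: I would take $\phi_h\equiv 1\in Q_h$ in \eqref{scheme_D}, so that $\jump{\phi_h}=0$ on every interior face, the flux term vanishes, and $\intO{D_t\vrh}=0$ gives $\intO{\vrh(t^k)}=\intO{\vrh(t^{k-1})}$; induction together with $\intO{\PiQ\vr_0}=\intO{\vr_0}$ finishes it.

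For the \emph{renormalized continuity equation} (3) I would test \eqref{scheme_D} with $\phi_h=b'(\vrh)\in Q_h$, which is admissible since $\vrh$ is piecewise constant. The temporal term is treated by the second-order Taylor expansion $b'(\vrh)D_t\vrh=D_t b(\vrh)+\tfrac{\TS}{2}b''(\xi)\abs{D_t\vrh}^2$ with $\xi\in\co{\vrh^{\triangleleft}}{\vrh}$, producing the first dissipation term in \eqref{r1}. The convective term $\sum_{\sigma\in\facesint}\intG{\Fup(\vrh,\vuh)\jump{b'(\vrh)}}$ is the genuinely laborious part: invoking the definition \eqref{num_flux} of $\Fup$, the elementwise form of $\Divh\vuh$, and a Taylor expansion of $b$ between $\vrh^{\rm in}$ and $\vrh^{\rm out}$, I would reorganise it into the convective contribution $\intO{(\vrh b'(\vrh)-b(\vrh))\Divh\vuh}$ plus the upwind and numerical-diffusion dissipation $-\sum_{\sigma}\intG{b''(\zeta)\jump{\vrh}^2(h^\eps+\tfrac12\abs{\us})}$, in the spirit of the energy computation \eqref{fluxes}. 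Getting every dissipative term with its correct sign is the main calculational obstacle of the lemma.

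For \emph{positivity} (4) I would induct on $k$ with the velocity frozen. For given $\vuh=\vuh^k$, equation \eqref{scheme_D} reads as a linear system $\mathbb A\,\underline{\vrh}=\tfrac1{\TS}\mathbb M\,\underline{\vrh}^{\triangleleft}$ for the cell values of $\vrh$, with $\mathbb M$ the positive diagonal mass matrix. By the upwind structure of \eqref{num_flux}, $\mathbb A$ has strictly positive diagonal and nonpositive off-diagonal entries, while testing the cell balance with $\ind_K$ shows that each column of $\mathbb A$ sums to $\abs{K}/\TS>0$; hence $\mathbb A^{T}$ is strictly diagonally dominant with positive diagonal, so $\mathbb A$ is a nonsingular M-matrix with $\mathbb A^{-1}\ge 0$ (indeed $>0$ by irreducibility on a connected mesh). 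Since $\vr_0>0$ and $\vrh^{\triangleleft}>0$ by induction, I conclude $\vrh^k>0$, with no CFL restriction. The \emph{exact divergence-free property} (5) is where the compatible (finite-element de Rham) design is decisive: because $\vBh\in\RT\subset\HcurlO$, its divergence $\Divh\vBh=\Div\vBh$ is a true element of $Q_h$, and the discrete Faraday law \eqref{scheme_B}, read through the compatible discrete curl, forces $D_t\vBh=-\Curl\vEh$ in $\RT$; the complex identity $\Div\Curl=0$ (i.e. $\Curl$ maps $\ND$ into $\ker\Div\cap\RT$) then gives $\Div D_t\vBh=0$, whence $\Div\vBh(t^k)=\Div\vBh(t^{k-1})$. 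Induction reduces everything to the initial instant, where the commuting-diagram property $\Div\PiR=\PiQ\Div$ yields $\Div\vBh^0=\PiQ(\Div\vB_0)=0$ since $\Div\vB_0=0$. The delicate point here is justifying the \emph{strong} identity $D_t\vBh+\Curl\vEh=0$ in $\RT$ from the weak form \eqref{scheme_B}: one must check that, thanks to $\vEh\in\ND$, $\vBh,\Curl\vEh\in\RT$ and the boundary conditions $\vEh\times\vn|_{\pd\Omega}=0$, $\vBh\cdot\vn|_{\pd\Omega}=0$, no spurious interface terms survive.

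Finally, \emph{existence} (1) I would obtain at each time step by a topological-degree argument. The M-matrix solvability above defines a continuous map $\vuh\mapsto\vrh[\vuh]$ via \eqref{scheme_D}; substituting it into \eqref{scheme_M}--\eqref{scheme_E} leaves a finite-dimensional nonlinear system for $(\vuh,\vBh,\vEh)$. Existence of a zero follows from Brouwer's degree once all solutions along a homotopy to a trivial system are confined to a fixed ball on which $\vrh$ stays positive; these a priori bounds are precisely the discrete energy estimate \eqref{es}, obtained by testing \eqref{scheme_M} with $\vuh$, \eqref{scheme_B} with $\vBh$ and \eqref{scheme_E} with $\vEh$, and using the renormalized identity \eqref{r1} with $b=\Hc$ to control the pressure potential. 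Thus the only real work beyond standard degree theory is securing those uniform bounds, which is why item (3) is proved first.
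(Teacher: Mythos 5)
Your proposal is correct and takes essentially the same route as the paper: the paper proves item 2 exactly as you do, establishes item 5 by the same de Rham--complex argument, and handles items 1, 3 and 4 by citing \cite{Ding}, \cite{Karper} and \cite{HS_MAC}, whose underlying arguments (topological degree with energy-based a priori bounds, testing with $b'(\vrh)$ plus Taylor expansions, and the upwind M-matrix structure) are precisely the ones you spell out. One remark: the weak-to-strong passage you flag as delicate in item 5 is dispatched in the paper by the single admissible test function $\vCh = D_t \vBh + \Curlh \vEh \in \RT$ (admissible precisely because $\Curlh \vEh \in \RT$ when $\vEh \in \ND$), which yields $\intO{\abs{D_t \vBh + \Curlh \vEh}^2} = 0$ at once; conversely, the paper leaves the initial step $\Divh \vBh^0 = 0$ implicit, which your commuting-diagram observation supplies.
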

\begin{proof}
\begin{itemize}
\item
The existence of a numerical solution to {\bf Scheme-I} \eqref{scheme} can be analogously proven as \cite{Ding} via the theorem of topological degree. %, see also similar results in \cite{SS_FSI}.
\item
Taking $\phi_h \equiv 1$ in the equation of continuity \eqref{scheme_D} immediately yields the mass conservation.
\item
We refer to~\cite[Lemma 4.1]{Karper} for the proof of renormalized continuity equation.
\item
Concerning the positivity of density, we refer to \cite[Lemma 3.2]{HS_MAC} for the proof, see also \cite{Karlsen,FeLMMiSh}.
\item  
By setting $\vCh = D_t \vBh + \Curlh \vEh \in \RT$ in the discrete problem \eqref{scheme_B} we obtain
\[\sum_{K \in \Oh} \int_K { \abs{D_t \vBh + \Curlh \vEh}^2}\dx=0,
\]
which implies $D_t \vBh + \Curlh \vEh=0$ for all $x\in K \in \Oh$. Applying a divergence operator to the equality and noticing that $\Divh (\Curlh \vEh) =0 $ we derive
\[ \Divh \vBh =  \Divh \vBh^{\triangleleft} = \cdots = \Divh \vBh^0 =0,
\]
which completes the proof. 
\end{itemize}
\end{proof}
%%%%%%%%%%%%%%%%

\subsubsection{Energy stability}
In this subsection, we show the energy stability of  scheme~\eqref{scheme} in the sense of  Definition~\ref{def_ca}. More precisely, we have the following energy stability. 

 \begin{Theorem}[Stability of {\bf Scheme-I}]\label{Tm1}\hspace{1em}\newline
 Let $(\vrh,\vuh, \vEh,\vBh)$ be a solution to {\bf Scheme-I} with $\gamma >1$. Then there exist $\xi \in \co{\vrh^{\triangleleft}}{\vrh }$ and  $\zeta \in \co{\vrh^{\rm in}}{\vrh^{\rm out}}$ for any $\sigma \in \facesint$ such that
 \begin{equation} \label{ke}
 \begin{split}
  D_t & \intO{ \left(\frac{1}{2} \vrh  \abs{\auh }^2  + \Hc(\vrh )
 +\frac{1}{2} \abs{\vBh }^2 \right)  }
 +  \mu \norm{\Gradh \vuh }_{L^2}^2  + \nu \norm{ \Divh \vuh }_{L^2}^2
 +  \norm{\vjh }_{L^2}^2
 \\ =&
 - \frac{\TS}{2} \intO{ \vrh^{\triangleleft}\abs{D_t \vuh }^2   }
 - \frac{ \TS}{2} \intO{ \abs{D_t \vBh }^2   }
 - \frac{\TS}2 \intO{ \Hc''(\xi)\abs{D_t \vrh }^2  }
 \\&
 - \sum_{ \sigma \in \facesint } \intG{ \left( \vrh^{\up} \frac{\abs{\us} }{2}
+h^\eps \avg{\vrh}   \right)\abs{\jump{\avc{\vuh}}}^2 }
% - h^\eps  \sum_{ \sigma \in \facesint } \intG{  \Ov{\vrh } \jump{ \vuh }^2  }
 -  \sum_{ \sigma \in \facesint } \intG{ \Hc''(\zeta) \jump{  \vrh  } ^2 \left(h^\eps  +\frac{\abs{\us} }{2} \right)}
  \leq 0.
 \end{split}
 \end{equation}
% where $\vjh= \vEh+ \vuh \times \vBh$ and $\nu = \frac{d-2}{d}\mu +\lambda$. 
\end{Theorem}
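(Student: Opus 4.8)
The plan is to derive the discrete balance \eqref{ke} by testing each of the four equations of {\bf Scheme-I} with the discrete solution itself and adding the resulting identities, mimicking at the mesh level the continuous computation that couples momentum$\,\cdot\,\vu$, Faraday$\,\cdot\,\vB$ and Amp\`ere$\,\cdot\,\vE$. Concretely, I would test the momentum equation \eqref{scheme_M} with $\vh=\vuh$, the magnetic equation \eqref{scheme_B} with $\vCh=\vBh$, Amp\`ere's law \eqref{scheme_E} with $\bfpsi_h=\vEh$, and — to handle the density-dependent terms — test the continuity equation \eqref{scheme_D} once with $\tfrac12|\auh|^2\in Q_h$ and once invoke its renormalized form \eqref{r1} with $b=\Hc$.

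The fluid part follows Karper's machinery. The first tool is the discrete product rule for the backward difference: $D_t(\vrh\auh)\cdot\auh=\tfrac12 D_t(\vrh|\auh|^2)+\tfrac12(D_t\vrh)|\auh|^2+\tfrac{\TS}{2}\vrh^\triangleleft|D_t\auh|^2$, and, without density, $(D_t\vBh)\cdot\vBh=\tfrac12 D_t|\vBh|^2+\tfrac{\TS}{2}|D_t\vBh|^2$. These produce the kinetic and magnetic energy time-derivatives together with the $O(\TS)$ dissipation measured by $\vrh^\triangleleft|D_t\vuh|^2$ and $|D_t\vBh|^2$ as in \eqref{ke}. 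The term $\tfrac12(D_t\vrh)|\auh|^2$ is absorbed through \eqref{scheme_D} tested with $\tfrac12|\auh|^2$, so that the convective flux from \eqref{scheme_M} combines with it via the algebraic identity \eqref{fluxes} to yield precisely the upwind/artificial-diffusion term $-\sum_{\sigma\in\facesint}\intG{\left(\tfrac12\vrh^{\up}|\us|+h^\eps\avg{\vrh}\right)|\jump{\auh}|^2}$. The pressure work $\intO{p_h\Divh\vuh}$ is converted into the internal-energy contribution $D_t\intO{\Hc(\vrh)}$ by \eqref{r1} with $b=\Hc$, using $\vrh\Hc'(\vrh)-\Hc(\vrh)=p(\vrh)=p_h$; this simultaneously manufactures the density dissipation weighted by $\Hc''(\xi)$ and $\Hc''(\zeta)$.

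The genuinely MHD-specific part — and the step I expect to be the main obstacle — is the electromagnetic coupling. After testing, \eqref{scheme_B} leaves the term $\intO{\vEh\cdot\Curlh\vBh}$, while \eqref{scheme_E} gives $\intO{\vBh\cdot\Curlh\vEh}=\intO{\vjh\cdot\vEh}$. To cancel the first against the second I rely on the discrete integration-by-parts identity $\intO{\vEh\cdot\Curlh\vBh}=\intO{\vBh\cdot\Curlh\vEh}$, which holds because $\RTz\subset\HcurlO$ and $\NDz\subset\HdivO$ form a compatible pair of the discrete de Rham complex and the boundary contributions vanish thanks to the homogeneous conditions built into $\FSz$ (the normal trace of $\vBh$ and the tangential trace of $\vEh$ are zero). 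Granting this, the coupling collapses to $\intO{\vjh\cdot\vEh}$; inserting the constitutive relation $\vjh=\vEh+\vuh\times\vBh^\triangleleft$ together with the scalar triple-product identity gives $\intO{\vjh\cdot\vEh}=\intO{|\vjh|^2}+\intO{(\vjh\times\vBh^\triangleleft)\cdot\vuh}$. The last integral is exactly the Lorentz-force term produced by \eqref{scheme_M} tested with $\vuh$, so the two cancel, leaving the dissipative contribution $\norm{\vjh}_{L^2}^2$ on the left-hand side.

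Collecting all contributions yields \eqref{ke}, with the viscous terms $\mu\norm{\Gradh\vuh}_{L^2}^2+\nu\norm{\Divh\vuh}_{L^2}^2$ carried through unchanged. It then remains to verify the sign: every right-hand term is non-positive because $\vrh>0$ on $[0,T]$ by Lemma~\ref{lem_b}, which forces $\xi,\zeta>0$ and hence $\Hc''(\xi),\Hc''(\zeta)>0$ by strict convexity of $\Hc$, while $\vrh^{\up},\avg{\vrh}\ge0$ and $h^\eps>0$; thus the discrete total energy is non-increasing, completing the argument.
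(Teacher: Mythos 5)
Your proposal is correct and follows essentially the same route as the paper's own proof: the paper likewise obtains the kinetic energy balance by testing \eqref{scheme_D} with $-\tfrac12\abs{\auh}^2$ and \eqref{scheme_M} with $\vuh$ (invoking \eqref{fluxes} and the backward-Euler product rules), obtains the magnetic/Joule terms by testing \eqref{scheme_B} with $\vBh$ and \eqref{scheme_E} with $\vEh$, converts the pressure work via \eqref{r1} with $b=\Hc$, and then sums, concluding the sign from $\vrh>0$ and convexity of $\Hc$. The only differences are presentational: the paper runs the Lorentz-force algebra in the opposite direction, writing $\intO{\vjh\times\vBh^\triangleleft\cdot\vuh}=-\intO{\vjh\cdot(\vjh-\vEh)}$ and then substituting $\intO{\vjh\cdot\vEh}=-\tfrac12\intO{D_t\abs{\vBh}^2+\TS\abs{D_t\vBh}^2}$, and it simply asserts (without justification) the same curl-adjoint identity $\intO{\vBh\cdot\Curlh\vEh}=\intO{\vEh\cdot\Curlh\vBh}$ that you correctly single out as the delicate step and support via the conforming pair $\RTz\times\NDz$ and the boundary conditions.
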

\begin{proof}
First, summing up \eqref{scheme_D} and \eqref{scheme_M} with the test functions $\phi_h = -\frac{\abs{\av{\vuh}}^2}{2}$ and $\bfphi_h = \vuh $  implies the discrete kinetic energy balance
%, see~\cite[Proposition 4.2]{Karper}, see also \cite[Section 4.2]{FKM}
\begin{equation*}%\label{k1}
\begin{aligned}
&\intO{D_t \left(\frac12 \vrh \abs{\av{\vuh}}^2 \right) } +  \mu   \norm{\Gradh \vuh}_{L^2}^2  + \nu \norm{ \Divh \vuh}_{L^2}^2 - \intO{ p_h \Divh \vuh }
\\& \quad
+  \frac{\TS}{2} \intO{ \vrh^{\triangleleft} \abs{D_t \vuh}^2}
+ h^\eps  \sum_{ \sigma \in \facesint } \intSh{  \avg{\vrh} \jump{ \vuh}^2  }
\\ &
= \intO{\vjh \times \vBh^\triangleleft \cdot \vuh }
=- \intO{\vjh  \cdot (\vuh \times \vBh^\triangleleft ) }
=- \intO{\vjh  \cdot (\vjh -\vEh ) }
.
\end{aligned}
\end{equation*}
Next, by setting $\vCh =\vBh$ in \eqref{scheme_B} and $\bfpsi_h=\vEh$ in \eqref{scheme_E}, respectively, we derive
\begin{equation*}%\label{k2}
\begin{aligned}
\intO{\vjh \cdot \vEh }
&=   \intO{\vBh \cdot \Curlh \vEh }
=  \intO{\vEh \cdot \Curlh \vBh }
= - \intO{D_t \vBh \cdot \vBh}
\\&
= - \frac{1}{2} \intO{\left( D_t \abs{\vBh}^2 + \TS \abs{D_t \vBh}^2\right) } .
\end{aligned}
\end{equation*}
Further, setting $b=\Hc(\vrh)$ in~\eqref{r1} and noticing the equality $\vr \Hc'(\vr) - H(\vr) =p(\vr)$ we observe the discrete internal energy balance
\begin{equation*}%\label{r1}
\intO{ \left( D_t \Hc(\vrh)  - p_h \Divh \vuh  \right) }
=
- \frac{\TS}2 \intO{\Hc''(\xi)|D_t \vrh|^2  }
- \frac12 \sum_{ \sigma \in \facesint } \intSh{ \Hc''(\zeta) \jump{  \vrh } ^2 \left(h^\eps  + | \us | \right) },
\end{equation*}
where $\xi \in \co{\vrh^{\triangleleft}}{\vrh}$ and  $\zeta \in \co{\vrh^{\rm in}}{\vrh^{\rm out}}$  are given in the third item of Lemma~\ref{lem_b}. Here, let us point out that $\Hc''(\vr)>0$ for all $\vr>0$ provided $\gamma>1$.

Finally, summing up the above three formulae completes the proof. 
\end{proof}

\paragraph{Uniform bounds.}
The {\sl a priori} estimates is a consequence of the energy estimates \eqref{ke}.
\begin{Lemma}[A priori estimates]\label{ests}
Let $(\vrh,\vuh,\vEh,\vBh)$ be a solution of {\bf Scheme-I} \eqref{scheme}. 
Then the following estimates hold
\begin{subequations}\label{ests1}
\begin{align}
\label{est_ener}
\norm{\vrh \abs{\auh}^2}_{L^{\infty}L^1}  \lesssim 1,\quad
\norm{p_h}_{L^{\infty}L^1}, \norm{\vrh}_{L^{\infty}L^\gamma}  \lesssim 1, \quad
\norm{\Gradh \vuh}_{L^2 L^2}   \lesssim 1,  \quad
\norm{\Divh \vuh}_{L^{2}L^2}     \lesssim 1,
% \\
% \label{estrjump}
% h^\eps \int_0^T \sum_{ \sigma \in \facesint } \intSh{  \avg{\vrh} \jump{ \vuh}^2}\dt  \lesssim 1, \quad
% \int_0^T  \sum_{ \sigma \in \facesint } \intSh{ \Hc''(\zeta) \jump{  \vrh } ^2 (h^\eps + | \Ov{\vuh} \cdot \vc{n} |) } \dt   \lesssim 1,
\\
\label{est_u}
\norm{\vuh}_{L^{2}L^6}  \lesssim 1,\quad
\norm{\vrh \av{\vuh} }_{L^\infty L^{\frac{2\gamma}{\gamma+1}}}  \lesssim 1,\quad
%\norm{\vrh\vuh}_{L^2 L^{\frac{6\gamma}{\gamma+6}}}  \lesssim 1,
\\
\norm{\vBh}_{L^{\infty}L^2}  \lesssim 1, \quad
\norm{\vjh}_{L^{2}L^2}  \lesssim 1, \quad
\norm{\vjh\times \vBh}_{L^{2}L^1}  \lesssim 1, \quad
\norm{\vEh}_{L^{2}L^{3/2}}  \lesssim 1. \label{est_B}
\end{align}
\end{subequations}
where $\zeta \in \co{\vr_K}{\vr_L}$ for any $\sigma=K|L \in \facesint$.
\end{Lemma}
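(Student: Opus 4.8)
The plan is to integrate the energy balance \eqref{ke} in time and then extract each estimate by combining the resulting bounds with discrete functional inequalities (Sobolev/Poincar\'e and H\"older), always tracking that the constants are independent of $h$ and $\TS$.

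First I would multiply \eqref{ke} by $\TS$ and sum over the time levels $t^1,\dots,t^k$. Since $D_t$ telescopes, $\sum_j \TS\, D_t E_h^j = E_h(\tau) - E_h(0)$, and since every term on the right-hand side of \eqref{ke} is non-positive (here one uses $\Hc''(\vr)>0$ for $\gamma>1$, and that the upwind and diffusive face contributions carry non-negative weights), this yields for every $\tau=t^k$
\begin{equation*}
\intO{ \left(\tfrac{1}{2}\vrh |\auh|^2 + \Hc(\vrh) + \tfrac12 |\vBh|^2\right)(\tau) } + \int_0^\tau \left( \mu \norm{\Gradh \vuh}_{L^2}^2 + \nu \norm{\Divh \vuh}_{L^2}^2 + \norm{\vjh}_{L^2}^2 \right)\dt \le E_h(0).
\end{equation*}
It then remains to bound $E_h(0)$ uniformly in $h$, which follows from the $L^p$-stability of the projections $\PiQ,\PiV,\PiN$ applied to the admissible initial data $(\vr_0,\vu_0,\vB_0)$; thus $E_h(0)\lesssim 1$.

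Next I would read off the primary estimates directly from the displayed inequality. The kinetic term gives $\norm{\vrh|\auh|^2}_{L^\infty L^1}\lesssim 1$; since $\Hc(\vrh)=\frac{a}{\gamma-1}\vrh^\gamma$, the potential term gives $\norm{\vrh}_{L^\infty L^\gamma}\lesssim 1$ and hence $\norm{p_h}_{L^\infty L^1}\lesssim 1$; the magnetic term gives $\norm{\vBh}_{L^\infty L^2}\lesssim 1$; the dissipation integrals give $\norm{\Gradh\vuh}_{L^2 L^2}\lesssim 1$ and $\norm{\vjh}_{L^2 L^2}\lesssim 1$. The bound on the divergence follows from $\Divh\vuh=\mathrm{tr}(\Gradh\vuh)$, so $\norm{\Divh\vuh}_{L^2 L^2}\lesssim\norm{\Gradh\vuh}_{L^2 L^2}$, irrespective of the sign of $\nu$.

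The remaining bounds follow by discrete embedding and H\"older. For $\norm{\vuh}_{L^2 L^6}$ I would invoke the discrete Sobolev inequality on $\CRz$, namely $\norm{\vuh}_{L^6}\lesssim\norm{\Gradh\vuh}_{L^2}$ (valid in $d\le 3$ with constant independent of $h$), together with the gradient bound. For $\norm{\vrh\av{\vuh}}_{L^\infty L^{2\gamma/(\gamma+1)}}$ I would write $\vrh\av{\vuh}=\sqrt{\vrh}\cdot(\sqrt{\vrh}\,\av{\vuh})$ and apply H\"older with $\sqrt{\vrh}\in L^\infty L^{2\gamma}$ and $\sqrt{\vrh}\,\av{\vuh}\in L^\infty L^2$. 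For the Lorentz force, $|\vjh\times\vBh|\le|\vjh|\,|\vBh|$ with $\vjh\in L^2 L^2$ and $\vBh\in L^\infty L^2$ gives $L^2 L^1$. Finally, from $\vEh=\vjh-\vuh\times\vBh^\triangleleft$ and $\tfrac16+\tfrac12=\tfrac23$, H\"older gives $\vuh\times\vBh^\triangleleft\in L^2 L^{3/2}$, while $\vjh\in L^2 L^2\hookrightarrow L^2 L^{3/2}$ on the bounded domain, yielding $\norm{\vEh}_{L^2 L^{3/2}}\lesssim 1$. The hard part will be the uniform-in-$h$ discrete Sobolev embedding for the nonconforming $\CRz$ space underlying $\norm{\vuh}_{L^2 L^6}$: because Crouzeix--Raviart functions are discontinuous across faces, this requires a broken Sobolev/Poincar\'e inequality whose constant does not degenerate as $h\to0$; everything else is a routine application of H\"older's inequality to the already-established energy bounds.
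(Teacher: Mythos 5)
Your proposal is correct and follows essentially the same route as the paper: read the primary bounds off the (time-integrated) energy inequality \eqref{ke}, then obtain $\norm{\vuh}_{L^2L^6}$ via the discrete Sobolev--Poincar\'e inequality, and derive the remaining bounds for $\vrh\avc{\vuh}$, $\vjh\times\vBh$ and $\vEh$ by exactly the same H\"older/triangle-inequality manipulations (including the splitting $\sqrt{\vrh}\cdot\sqrt{\vrh}\,\avc{\vuh}$ and the exponent balance $\tfrac16+\tfrac12=\tfrac23$). Your added care about summing \eqref{ke} in time, bounding $E_h(0)$ via projection stability, and noting that $\norm{\Divh\vuh}_{L^2L^2}$ is controlled by the gradient even when $\nu=0$ are details the paper leaves implicit, but they do not change the argument.
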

\begin{proof}
Firstly, the first line \eqref{est_ener} is obvious from the  energy estimates \eqref{ke}.
Secondly, applying the Sobolev-Poincar\'{e} inequality and noticing the bounds of velocity gradient in \eqref{est_ener} imply the first estimates of \eqref{est_u}. Further, by H\"older's inequality, we derive the second estimate of \eqref{est_u}, i.e.,
\[ \norm{\vrh\auh}_{L^\infty L^{\frac{2\gamma}{\gamma+1}}}  =
\norm{\sqrt{\vrh} \sqrt{\vrh}\auh}_{L^\infty L^{\frac{2\gamma}{\gamma+1}}} \lesssim
\norm{\sqrt{\vrh}}_{L^\infty L^{2\gamma}}  \norm{\sqrt{\vrh}\auh}_{L^\infty L^2}
= \norm{\vrh}_{L^\infty L^{\gamma}}^{1/2}  \norm{\vrh \abs{\auh}^2}_{L^\infty L^1} ^{1/2}
\lesssim 1,
\]
%\[ \norm{\vrh\vuh}_{L^2 L^{\frac{6\gamma}{\gamma+6}}}
%\lesssim \norm{\vrh}_{L^\infty L^{\gamma}}  \norm{\vuh}_{L^2 L^6}
%\lesssim 1.
%\]
Next, the energy estimate \eqref{ke} directly implies the first two estimates of \eqref{est_B}. Finally, we get the last two estimates of \eqref{est_B} thanks to H\"older's inequality and triangular inequality, i.e.,
\[
\norm{\vjh\times \vBh}_{L^{2}L^1}
\aleq \norm{\vjh}_{L^{2}L^2}
 \norm{\vBh}_{L^{\infty}L^2} \aleq 1,
\]
and
\[
\begin{split}
& \norm{\vEh}_{L^{2}L^{3/2}}  =
\norm{\vjh - \vuh \times \vBh^\triangleleft}_{L^{2}L^{3/2}}
\aleq \norm{\vjh}_{L^{2}L^{3/2}} +
\norm{ \vuh \times \vBh^\triangleleft}_{L^{2}L^{3/2}} \\
&\aleq \norm{\vjh}_{L^{2}L^{2}} +
\norm{ \vuh }_{L^{2}L^6}
\norm{ \vBh}_{L^{\infty}L^{2}}
\lesssim    1 .
\end{split}
\]
\end{proof}

% To show the consistency of the numerical scheme we shall need further bounds on the numerical solution, which can be derived provided the adiabatic coefficient in \eqref{pressure} lies in the physically realistic range $\gamma\in(1,2).$
% \begin{Lemma}\label{ests2}
%  Let $(\vrh, \vuh)$ satisfy scheme \eqref{scheme}, $h\in(0,1)$ and  $\gamma \in(1,2)$. Then there hold
%  \begin{subequations}\label{est_rho_gamma12}
% \begin{align}
% %\label{est_srho_l2li}
% %\norm{\sqrt{\vrh}}_{L^2 L^\infty }  &\aleq h ^{ - \frac{\eps+2}{2\gamma}} , \\
% \label{est_rho_l2l2}
% \norm{\vrh}_{L^2 L^2 }  &\aleq h ^{ - \frac{\eps+2}{2\gamma}}, \\
% \label{est_m_l2l2}
%  \norm{\vrh \vuh} _{L^2L^2} &\aleq h^{- \frac{\eps+2}{2\gamma}}.
% \end{align}
% \end{subequations}
% \end{Lemma}

%%%%%%%%%%%%%%%%%%%%%%%%%%%%%%%%%%%% Consistency %%%%%%%
\subsubsection{Consistency}\label{sec_Consistency}
In this subsection, we aim to show the consistency of the numerical solutions of {\bf Scheme-I} \eqref{scheme}, which requires to replace the discrete test functions in \eqref{scheme} by the smooth test functions given in Definition~\ref{def_ca}. To this ends, we recall the following interpolation operators:
\begin{equation}\label{proj1}
\begin{aligned}
% &\PiQ \ :\ L^2(\Omega) \rightarrow Q_h,
% && \PiQ{\phi}|_K =\avK{\phi} \equiv \frac{1}{|K|} \intK{\phi},  \,\, \forall \; K \in \Oh,
% \\
& \PiV \ :\ W^{1,2}(\Omega) \rightarrow  \CR,
&& \intG \PiVv  =  \intG \vv  , \,\, \forall \; \sigma \in \mcE,
\\
& \PiN \ :\ W^{1,2}(\Omega) \rightarrow \ND,
&& \intG {\PiN \vv \times \bfn } =  \intG {\vv \times \bfn }  , \,\, \forall \; \sigma \in \mcE,
\\
& \PiR \ :\ W^{1,2}(\Omega) \rightarrow \RT,
&& \intG{ \PiR \vv \cdot \bfn}  =  \intG {\vv \cdot \bfn } , \,\, \forall \; \sigma \in \mcE.
\end{aligned}
\end{equation}
These interpolation operators satisfy the following interpolation error estimates~\cite{Brezzi,CR_elements}.
\begin{Lemma}\label{IE}
For any $\vv \in C^1(\Omega)$ $\vu \in C^2(\Omega)$, $p\in[1,\infty]$, the following hold
\[
\begin{aligned}
% \norm{\phi - \PiQ \phi}_{L^p} \aleq h \norm{\Grad  \vv}_{L^p},
% \quad
\norm{\vv - \PiVv }_{L^p}   \aleq h \norm{\vv}_{C^1},
\quad
\norm{\vv - \PiN \vv }_{L^p}   \aleq h \norm{\vv}_{C^1},
\quad
\norm{\vv - \PiR \vv }_{L^p}   \aleq h \norm{\vv}_{C^1},
\\
\norm{\Curl \vu -  \Curlh \PiN \vu }_{L^p}   \aleq h \norm{\vu}_{C^2},
\quad
\norm{\Curl \vu -  \Curlh \PiR \vu }_{L^p}   \aleq h \norm{\vu}_{C^2}.
\end{aligned}
\]
\end{Lemma}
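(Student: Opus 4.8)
The plan is to establish all five estimates by the classical route of local interpolation analysis: reduce to a fixed reference element by affine scaling, invoke the polynomial-reproduction property of each degree-of-freedom set together with the Bramble--Hilbert lemma, and then sum the local contributions over $K \in \Oh$. Throughout one uses shape-regularity of the mesh, so that the constants arising in the scaling from each $K$ to the reference element are controlled uniformly, and powers of $h$ are tracked uniformly in $p \in [1,\infty]$.

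For the three first-line estimates I would proceed as follows. Each of $\PiV, \PiN, \PiR$ is defined through face moments of $\vv$ (the mean of $\vv$, of $\vv \times \bfn$, and of $\vv \cdot \bfn$ respectively, see \eqref{proj1}), and each reproduces at least constant vector fields on every element $K$. Hence for an arbitrary constant $\vc{c}$ one may write $\vv - \PiV \vv = (\vv - \vc{c}) - \PiV(\vv - \vc{c})$ on $K$, and likewise for $\PiN$ and $\PiR$. Bounding the interpolant through the boundedness of its defining degrees of freedom (a trace inequality on $\partial K$ followed by scaling) and then choosing $\vc{c}$ to be the mean of $\vv$ over $K$, a Poincar\'e estimate yields $\norm{\vv - \vc{c}}_{L^p(K)} \aleq h \norm{\Grad \vv}_{L^\infty(K)}$. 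Summing over $K$ and using $\norm{\Grad \vv}_{L^\infty} \le \norm{\vv}_{C^1}$ gives the claimed $O(h)$ bounds; this is exactly the first-order Bramble--Hilbert estimate for these lowest-order spaces recorded in \cite{Brezzi, CR_elements}.

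For the two curl estimates I would exploit the de Rham (commuting-diagram) structure of the N\'ed\'elec and Raviart--Thomas interpolants. The key point is that the discrete curl commutes with the relevant interpolation up to a companion moment-interpolation operator $\Pi'$ into the adjacent space of the underlying finite-element complex, i.e. $\Curlh \PiN \vu = \Pi'\, \Curl \vu$ (and analogously for $\PiR$). Consequently $\Curl \vu - \Curlh \PiN \vu = \Curl \vu - \Pi'(\Curl \vu)$ is itself a first-moment interpolation error, now of the field $\Curl \vu$. Since $\vu \in C^2$ forces $\Curl \vu \in C^1$ with $\norm{\Curl \vu}_{C^1} \aleq \norm{\vu}_{C^2}$, the first-line estimate already proven applies verbatim and delivers $\norm{\Curl \vu - \Curlh \PiN \vu}_{L^p} \aleq h \norm{\vu}_{C^2}$, and the same for $\PiR$.

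I expect the main obstacle to be not any single estimate but the bookkeeping needed to justify the commuting relation $\Curlh \Pi = \Pi'\, \Curl$ on these specific lowest-order element pairs with the face-moment degrees of freedom in \eqref{proj1}, including the correct identification of $\Pi'$ in dimension $d=2,3$; once this is in hand (it is classical, cf. \cite{Brezzi}) the curl estimates collapse onto the already-treated $L^p$ case. A secondary technical point is the uniform $h$-scaling of the trace inequalities that bound each interpolant by its defining face data, which must hold uniformly in $p \in [1,\infty]$ by mesh regularity. As all of these ingredients are standard, the proof ultimately reduces to invoking the interpolation theory of \cite{Brezzi, CR_elements}.
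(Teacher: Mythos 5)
The paper offers no proof of this lemma at all: it simply cites \cite{Brezzi,CR_elements}, so your reconstruction of the standard interpolation theory is, for most of the statement, exactly what those references contain. Your argument for the three first-line estimates (reproduction of constants by the face-moment degrees of freedom, stability of the interpolant via trace/scaling, Bramble--Hilbert on the reference element, summation over $K$) is the classical one and is sound. The fourth estimate is also handled correctly by your commuting-diagram argument, with one caveat: it requires the genuine N\'ed\'elec edge shape functions $\vc{a}+\vc{b}\times\bfx$ on each element (the paper's displayed local space $\mcP_0^d\oplus\mcP_0^1\bfx$ for $\ND$ must be read as a typo, since fields of the form $\vc{a}+b\bfx$ have vanishing curl and the estimate would be vacuous); with that reading, $\Curlh \PiN \vu = \PiR \Curl \vu$ holds and the error collapses onto a first-line estimate for $\Curl\vu\in C^1$, as you say.

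The genuine gap is in the fifth estimate, where you write ``and analogously for $\PiR$.'' There is no analogous commuting relation for the curl of the Raviart--Thomas interpolant: in the discrete de Rham complex, $\PiR$ commutes with the \emph{divergence}, $\Divh \PiR \vu = \PiQ \Div \vu$, not with the curl. Worse, for the lowest-order space $\RT$ as defined in the paper (local shape functions genuinely of the form $\vc{a}+b\bfx$), the element-wise curl of \emph{every} member of $\RT$ vanishes identically, because $\Curl(\vc{a}+b\bfx)=\vc{0}$. Hence $\Curlh\PiR\vu\equiv\vc{0}$, the quantity to be estimated equals $\norm{\Curl\vu}_{L^p}$, which is $O(1)$ and not $O(h)$, and no choice of companion operator $\Pi'$ can produce the relation $\Curlh\PiR\vu=\Pi'\,\Curl\vu$ you invoke. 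So your proof fails at precisely the step you flagged as ``classical bookkeeping''; it is not bookkeeping but a structural obstruction. (The difficulty is arguably inherited from the statement itself: with the element-wise curl and the lowest-order RT space the fifth bound cannot hold as written --- it would hold with $\Div$ in place of $\Curl$, via the true commuting property of $\PiR$ --- which is presumably why the paper conceals the issue behind a citation.)
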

% We also need a discrete variant of Poincar\'{e}'s inequality, see~\cite[Proposition 4.13]{Temam_NS}
% \begin{equation}\label{poin_ineq}
% \norm{\vv }_{L^6} \aleq \norm{\Grad  \vv}_{L^2},\  \forall\ \vv \in \CR.
% \end{equation}
\noindent
Now we are ready to prove the consistency of  {\bf Scheme-I}~\eqref{scheme} in the sense of Definition~\ref{def_ca}.
 \begin{Theorem}[Consistency of the solution of {\bf Scheme-I} \eqref{scheme}]\label{Tm2} \hspace{1em}\newline
 Let $(\vrh, \vuh,\vEh,\vBh)$ be a solution of {\bf Scheme-I} \eqref{scheme} on the time interval $[0,T]$ with $\TS\approx h$, $\gamma>4d /(1+3d)$ and the artificial diffusion parameter satisfy 
 \begin{equation}\label{eps}
\eps >0 \mbox{ if } \gamma \geq 2 \quad \mbox{ and }\quad   \eps \in(0, 2 \gamma-1 -d/3) \mbox{ if } \gamma \in(4d /(1+3d),2).
\end{equation} 
 Then \eqref{cP1}--\eqref{cP4} hold. 
% \begin{equation} \label{cP1}
% - \intO{ \vrh^0 \phi(0,\cdot) }  =
% \int_0^\tau \intO{ \left[ \vrh \partial_t \phi + \vrh \vuh \cdot \Grad \phi \right]} \dt  + \int_0^T
% e_{1,h} (t, \phi) \dt,
% \end{equation}
% for any $\phi \in C_c^2([0,T) \times \Ov{\Omega})$;
% \begin{equation} \label{cP2}
% \begin{split}
% - &\intO{ \vrh^0 \vuh^0 \vv(0,\cdot) }  =
% \int_0^T \intO{ \left[ \vrh \vuh \cdot \partial_t \vv + \vrh \vuh \otimes \vuh  : \Grad \vv  + p_h \Div \vv \right]} \dt,
% \\&
%  -   \int_0^T \intO{  \S( \Grad \vuh) : \Grad \vv}  \dt
% + \int_0^T \intO{ \vjh \times \vBh \cdot  \vv}\dt
%  + \int_0^T e_{2,h} (t, \vv) \dt
% \end{split}
% \end{equation}
% for any $\vv \in C^2_c([0,T] \times {\Omega}; R^d)$;
%
% \begin{equation} \label{cP3}
%  \int_0^T  \intO{\vjh \cdot \bfpsi -  \vBh \cdot(\Curl \bfpsi) }
%  = \int_0^T e_{3,h} (t, \bfpsi) \dt
% \end{equation}
% for any $\bfpsi \in C^2_c([0,T] \times {\Omega}; R^d)$;
%
% \begin{equation} \label{cP4}
%  \int_0^T  \intO{D_t \vBh \cdot \vC} + \intO{\vEh \cdot \Curl\vC}
%   = \int_0^T e_{4,h} (t, \bfpsi) \dt
% \end{equation}
%  for any $\vC \in C^2_c([0,T] \times {\Omega}; R^d)$;
%
% \[
% \| e_{j,h} (\cdot, \phi ) \|_{L^1(0,T)} \lesssim h^\beta   \norm{ \phi }_{C^2}  , \, j=1,2,3,4  \, \mbox{ for some }\ \beta > 0.
% \]
 \end{Theorem}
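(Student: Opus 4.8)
The plan is to verify each of the four consistency identities \eqref{cP1}--\eqref{cP4} by taking the smooth test functions from Definition~\ref{def_ca}, replacing them by their respective interpolants in the scheme \eqref{scheme_D}--\eqref{scheme_E}, and then controlling the resulting consistency errors using the interpolation estimates of Lemma~\ref{IE} together with the uniform bounds of Lemma~\ref{ests}. The general strategy is standard for this class of mixed FV--FE methods: each error term splits into a time-discretization defect (from the backward Euler operator $D_t$ and the projection $\Pi_h$), a spatial interpolation defect, and the artificial diffusion contribution proportional to $h^\eps$, and each must be shown to vanish as $h \to 0$.

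First I would treat the continuity equation \eqref{cP1}. Testing \eqref{scheme_D} with $\phi_h = \PiQ \phi$ for a smooth $\phi$, I would rewrite the time-difference term via summation by parts in time to recover $\int \vrh \partial_t \phi$, picking up a $\TS$-order defect, and then compare the discrete convective flux $\Fup(\vrh,\vuh)\jump{\phi_h}$ with $\int \vrh \vuh \cdot \Grad\phi$. The upwind part produces an $O(h)$ consistency error controlled by $\norm{\vrh}_{L^\infty L^\gamma}$ and $\norm{\Gradh \vuh}_{L^2L^2}$, while the diffusive part contributes $h^\eps \jump{\vrh}\jump{\phi_h}$, bounded using Cauchy--Schwarz and the numerical-dissipation bound from \eqref{ke}. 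The balance of momentum \eqref{cP2} is handled analogously with $\vh = \PiV \vv$, now carrying the extra Lorentz-force term $\vjh \times \vBh^\triangleleft$ and the viscous/pressure terms; the time-lag in $\vBh^\triangleleft$ gives an $O(\TS)$ defect controlled by $\norm{\vjh}_{L^2L^2}$ and $\norm{\vBh}_{L^\infty L^2}$, and the $\Pi_h$-projection mismatch is absorbed using the property $\norm{\Pi_h\vuh-\vuh}_{L^2}\le h\norm{\Gradh\vuh}_{L^2}$. The Maxwell equation \eqref{cP3} and Amp\`ere's law \eqref{cP4} follow by testing \eqref{scheme_B} with $\vCh=\PiR\vc{C}$ and \eqref{scheme_E} with $\bfpsi_h=\PiN\bfpsi$; here the key is that $\RT\subset\HcurlO$ and $\ND\subset\HdivO$ are conforming, so the curl terms require only the estimates $\norm{\Curl\vc{C}-\Curlh\PiR\vc{C}}_{L^p}\aleq h\norm{\vc{C}}_{C^2}$ from Lemma~\ref{IE}, paired with $\norm{\vEh}_{L^2L^{3/2}}$ and $\norm{\vBh}_{L^\infty L^2}$.

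The main obstacle I anticipate is the momentum equation, specifically pinning down the precise integrability needed to make the convective-flux error and the artificial-diffusion error vanish simultaneously in the low-adiabatic regime $\gamma\in(4d/(1+3d),2)$. The delicate balance is that the upwind consistency error scales like a power of $h$ that degrades as $\gamma$ decreases, while the diffusive term scales like $h^\eps$; reconciling these forces the restriction \eqref{eps} on $\eps$, namely $\eps\in(0,2\gamma-1-d/3)$. Establishing this requires a careful H\"older/interpolation argument estimating terms such as $\sum_\sigma\intG h^\eps\jump{\vrh}\jump{\avc{\vuh}}$ and $\vrh^{\up}\abs{\us}$-weighted flux remainders against the mesh-dependent quantities bounded in \eqref{ke}, and then tracking the exact exponents of $h$ that survive after invoking the inverse and trace inequalities. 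Once the admissible range of $\eps$ is identified so that every leading power of $h$ is strictly positive, each $e_{i,h}[\cdot]\to 0$ and the four identities \eqref{cP1}--\eqref{cP4} hold, completing the proof.
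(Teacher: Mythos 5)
Your treatment of the Maxwell equation \eqref{cP3} and Amp\`ere's law \eqref{cP4} is essentially identical to the paper's: test \eqref{scheme_B} with $\PiR\vC$ and \eqref{scheme_E} with $\PiN\bfpsi$, sum by parts in time, and control the errors by H\"older's inequality, the interpolation estimates of Lemma~\ref{IE}, and the bounds $\norm{\vEh}_{L^2L^{3/2}}$, $\norm{\vBh}_{L^\infty L^2}$, $\norm{\vjh}_{L^2L^2}$ from Lemma~\ref{ests}. The divergence lies in the Navier--Stokes part: the paper does \emph{not} prove \eqref{cP1}--\eqref{cP2} here at all, but invokes the consistency analysis already carried out in \cite{LiShe_MHD1} (itself following Karper's scheme analysis), whereas you attempt a direct proof.

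That direct attempt is where the genuine gap sits. You correctly identify that the whole difficulty is the exponent bookkeeping for the upwind-flux error and the artificial-diffusion term $h^\eps\jump{\vrh}$ in the regime $\gamma\in(4d/(1+3d),2)$ --- this is precisely what produces the hypotheses $\gamma>4d/(1+3d)$ and $\eps\in(0,2\gamma-1-d/3)$ --- but your proposal stops at ``track the exact exponents of $h$ \dots{} and check that every leading power is strictly positive,'' which restates the task rather than performing it. Without the actual chain of inverse/trace estimates (e.g.\ bounding $\sum_\sigma\intG{h^\eps\abs{\jump{\vrh}}\,\abs{\jump{\avc{\vuh}}}}$ and the $\vrh^{\up}\abs{\us}$-weighted remainders against the dissipation terms in \eqref{ke}, with the density only in $L^\infty L^\gamma$), the claimed range of $\eps$ is assumed, not derived; an examiner could not reconstruct why $2\gamma-1-d/3$ is the threshold. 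A smaller imprecision: the time-lag defect $\intTO{\vjh\times(\vBh^\triangleleft-\vBh)\cdot\vv}$ is not controlled by $\norm{\vjh}_{L^2L^2}$ and $\norm{\vBh}_{L^\infty L^2}$ alone; you need smallness of $\vBh-\vBh^\triangleleft$ in $L^2L^2$, which comes from the numerical dissipation term $\TS\intO{\abs{D_t\vBh}^2}$ in \eqref{ke}, not from the uniform bound on $\vBh$. Either complete these estimates or do what the paper does and cite the established Navier--Stokes consistency result; as written, the proof of \eqref{cP1}--\eqref{cP2} is a plan rather than an argument.
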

\begin{proof}
First, the consistency analysis of the Navier--Stokes part of {\bf Scheme-I} \eqref{scheme} has been shown in \cite{LiShe_MHD1}, meaning that \eqref{cP1} and \eqref{cP2} hold.  
%Thus we have \eqref{cP1} for free and there exists a $\beta>0$ such that
%\begin{equation}
%\begin{split}
%- &\intO{ \vrh^0 \vuh^0 \vv(0,\cdot) }  =
%\int_0^T \intO{ \left[ \vrh \vuh \cdot \partial_t \vv + \vrh \vuh \otimes \vuh  : \Grad \vv  + p_h \Div \vv \right]} \dt,
%\\&
% -   \int_0^T \intO{  \S( \Grad \vuh) : \Grad \vv}  \dt
%+ \int_0^T \intO{ \vjh \times \vBh \cdot  \PiV\vv}\dt
% + h^{\beta}.
%\end{split}
%\end{equation}
%Then we derive \eqref{cP2} by the following estimates
%\[
% \int_0^T \intO{ \vjh \times \vBh \cdot (\PiV\vv -\vv)}\dt
% \aleq \norm{\vjh}_{L^2L^2} \norm{\vBh}_{L^2L^2} h \norm{\vv}_{C^1}
%\aleq h.
%\]
%where we have used H\"older's inequality, the uniform bounds \eqref{est_u} and \eqref{est_B} as well as Lemma~\ref{IE}.
Thus we only need to prove \eqref{cP3} and \eqref{cP4}. 
In order to get \eqref{cP3}, we set $\vCh=\PiR \vC$ as the test function in \eqref{scheme_B} and analyze each term in the following. For the time derivative term we have
\[ \begin{aligned}
&\int_0^T  \intO{D_t \vBh \cdot \PiR \vC}
=\frac{1}{\TS}\int_0^T  \intO{ \vBh(t)-\vBh(t-\TS) \cdot \PiR \vC(t)}
\\&
=\frac{1}{\TS}\int_0^T  \intO{ \vBh(t)\cdot \PiR \vC(t)}
- \frac{1}{\TS}\int_{-\TS}^{T-\TS}  \intO{ \vBh(t)\cdot \PiR \vC(t+\TS)}
\\& =
-\int_0^T  \intO{ \vBh(t)\cdot D_t  \PiR \vC }
- \frac{1}{\TS}\int_{-\TS}^{0}  \intO{ \vBh(t)\cdot \PiR \vC(t+\TS)}
\\ & \qquad + \frac{1}{\TS}\int_{T-\TS}^{T}  \intO{ \vBh(t)\cdot \PiR \underbrace{\vC(t+\TS)}_{=0}}
\\& =
-\int_0^T  \intO{ \vBh(t)\cdot D_t  \PiR \vC }
-  \intO{ \vBh^0\cdot \PiR \vC(0)}
\\& =
-\int_0^T  \intO{ \vBh(t)\cdot \pdt \vC }
-  \intO{ \vBh^0\cdot \vC(0)}
+I_1+I_2
\end{aligned}
\]
where by H\"older's inequality and the estimate \eqref{est_B} we control
\[
I_1 = \int_0^T  \intO{ \vBh(t)\cdot (\pdt \vC - D_t  \PiR \vC) }\dt
\aleq  \norm{\vBh}_{L^2L^2} \TS \norm{\vC}_{C^2} \aleq h,
\]
and
\[
I_2 =   \intO{ \vBh(0)\cdot \big(\vC(0) - \PiR \vC(0)\big)}
\aleq  \norm{\vBh^0}_{L^1} \TS \norm{\vC}_{C^2} \aleq h.
\]

Next, using H\"older's inequality again with the estimate \eqref{est_B} and Lemma~\ref{IE} we derive
\[
\begin{aligned}
 \int_0^T  \intO{ \vEh \cdot \Curl (\PiR \vC -  \vC)}\dt
 \aleq  h\norm{\vC}_{C^2} \norm{\vEh}_{L^1L^1}
 \aleq  h\norm{\vC}_{C^2} \norm{\vEh}_{L^2L^{\frac32}}
 \aleq h.
\end{aligned}
\]
Collecting the above four formulae we obtain \eqref{cP3}.

Now we are left to show \eqref{cP4}. To proceed, we set $\bfpsi_h =\PiN \bfpsi$ as the test function in \eqref{scheme_E}. Then by H\"older's inequality, the uniform bounds \eqref{est_B} and Lemma~\ref{IE} we derive
\[ \int_0^T  \intO{\vjh \cdot (\bfpsi -\PiN \bfpsi )} \dt
 \aleq \norm{\vjh}_{L^2L^2} h\norm{\bfpsi}_{C^1}
 \aleq h,
\]
and
\[
 \int_0^T  \intO{ \vBh \cdot(\Curlh \PiN\bfpsi - \Curl \bfpsi ) }
 \aleq  \norm{\vBh}_{L^2L^2} h \norm{\bfpsi}_{C^2}
 \aleq h.
\]
Summing up the above two estimates, we finish the proof of \eqref{cP4} and the whole proof of consistency. 
\end{proof}

\subsubsection{Convergence}
Now we are ready to show the convergence of {\bf Scheme-I}.
\begin{Theorem}[Convergence of {\bf Scheme-I}]\label{Th_SA}
Let $(\vrh, \vuh,\vEh, \vBh)$ be a solution to {\bf Scheme-I}  with $\TS \approx h$, $\gamma > \frac{4d}{1+3d}$ and the artificial diffusion parameter satisfy \eqref{eps}.  Then it converges in the sense of Theorem~\ref{Th2}.
\end{Theorem}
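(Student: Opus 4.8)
The plan is to realise this theorem as a direct corollary of the abstract convergence result, Theorem~\ref{Th2}: it suffices to verify that a solution $(\vrh,\vuh,\vEh,\vBh)$ of {\bf Scheme-I} is a \emph{consistent approximation} in the sense of Definition~\ref{def_ca}, after which the two convergence statements follow verbatim. Thus I would organise the argument as an assembly of the structural results already established for the scheme, checking in turn the stability requirement \eqref{es}, the consistency requirements \eqref{cP1}--\eqref{cP4}, and the exact divergence constraint \eqref{cP5}, together with the compatibility of the discrete operators $\Gradh,\Divh,\Curlh$ with their continuous counterparts. This last point is immediate here, since by construction they coincide with $\Grad,\Div,\Curl$ element-wise.

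First, for the stability \eqref{es} I would integrate the discrete energy balance \eqref{ke} of Theorem~\ref{Tm1} in time. Since $\gamma>1$ forces $\Hc''>0$ and the upwind and artificial-diffusion contributions are manifestly nonnegative, every term on the right-hand side of \eqref{ke} is nonpositive, whence $E_h(\tau)+\int_0^\tau(\ldots)\le E_h(0)$. The only genuine check is that the discrete dissipation $\mu\norm{\Gradh\vuh}_{L^2}^2+\nu\norm{\Divh\vuh}_{L^2}^2$ appearing in \eqref{ke} coincides with $\int_\Omega\S(\Gradh\vuh):\Gradh\vuh$ demanded in \eqref{es}; this is the element-wise algebraic identity for the chosen viscous form with $\nu=\tfrac{d-2}{d}\mu+\lambda$. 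I would also record that the projection $\Pi_h=\PiQ$ satisfies $\norm{\PiQ\vuh-\vuh}_{L^2}\le h\norm{\Gradh\vuh}_{L^2}$ on Crouzeix--Raviart functions, so that $E_h$ here is exactly the energy functional of Definition~\ref{def_ca} (note $\Pi_h\vuh=\auh$).

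Next, the consistency formulations \eqref{cP1}--\eqref{cP4} are precisely the content of Theorem~\ref{Tm2}, valid under the hypotheses $\TS\approx h$, $\gamma>4d/(1+3d)$ and the diffusion-parameter restriction \eqref{eps} --- exactly the assumptions of the present theorem. The divergence-free identity \eqref{cP5}, $\Divh\vBh=0$, is item~5 of Lemma~\ref{lem_b}, inherited from the initial datum via the choice $\vCh=D_t\vBh+\Curlh\vEh$ in \eqref{scheme_B} and the relation $\Divh\Curlh=0$. With \eqref{es} and \eqref{cP1}--\eqref{cP5} in hand, the solution of {\bf Scheme-I} is a consistent approximation, and Theorem~\ref{Th2} yields both (i) weak and weak-$(\ast)$ convergence of a subsequence to a dissipative weak solution (which in this $\gamma$-range also proves existence of such solutions), and (ii), whenever a classical solution in the class \eqref{STC} exists, convergence of the whole sequence to it.

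The substantive analytic work --- the $\gamma$- and $\eps$-dependent estimates controlling the consistency errors, built on the uniform bounds of Lemma~\ref{ests} --- is already discharged inside Theorem~\ref{Tm2}, so at the level of this theorem the proof is an assembly rather than a computation. If I had to isolate the one point deserving care, the hard part will be the faithful identification of the discrete viscous dissipation with $\int_\Omega\S(\Gradh\vuh):\Gradh\vuh$ together with the projection estimate for $\PiQ$, since these are exactly what turn the stability output of Theorem~\ref{Tm1} into the literal stability hypothesis \eqref{es} required by Definition~\ref{def_ca}; everything else transcribes directly.
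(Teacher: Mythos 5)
Your proposal is correct and follows essentially the same route as the paper: the paper's proof likewise notes the element-wise compatibility of $\Gradh,\Divh,\Curlh$ with their continuous counterparts, combines Theorem~\ref{Tm1} (stability) with Theorem~\ref{Tm2} (consistency) to conclude that the solution of {\bf Scheme-I} is a consistent approximation in the sense of Definition~\ref{def_ca}, and then invokes Theorem~\ref{Th2}. Your additional checks (identifying the discrete viscous dissipation with $\int_\Omega\S(\Gradh\vuh):\Gradh\vuh$, the projection estimate for $\PiQ$, and the divergence-free property from Lemma~\ref{lem_b}) are details the paper leaves implicit, so you are if anything more careful than the original.
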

\begin{proof}
First, it is obvious that the discrete operators $\Gradh$, $\Divh$ and $\Curlh$ are compatible with the corresponding continuous operator, see \cite[Section 13.4]{FeLMMiSh}. 
Next, combining Theorem~\ref{Tm1} (stability) and Theorem~\ref{Tm2} (consistency) we realize that the numerical solution of {\bf Scheme-I} is a consistent approximation of the MHD system in the sense of Definition~\ref{def_ca}. Applying Theorem~\ref{Th2} we derive the convergence for {\bf Scheme-I}.
\end{proof}

\subsection{Example II}
We have shown the convergence of a mixed FV--FE scheme~\eqref{scheme}.  Here, let us discuss another example by combining our magnetic--electric solver \eqref{scheme_B} and \eqref{scheme_E} with some suitable schemes for the Navier--Stokes equations.

\begin{tcolorbox}
\begin{Definition}[{\bf Scheme-II}]
Let $(\vr_0,\vu_0,\vB_0)$ be the initial data stated in \eqref{ini_c} and the discrete initial data be given by 
\begin{equation}\label{dini}
(\vrh^0,\vuh^0, \vBh^0)=(\PiQ \vr_0, \PiV \vu_0, \PiR \vB_0). 
\end{equation}
Let $\Omega_h$ be a uniform and regular mesh discretization of $\Omega$ consists of rectangles in 2D or cuboids in 3D. 
We say $(\vrh,\vuh, \vBh, \vEh) \in L_{\TS}(0, T;Q_h \times(Q_h)^d  \times \RT \times \ND)$ is a mixed finite volume--finite element approximation of the MHD system \eqref{pde}--\eqref{ini_c} if it satisfies 
 \eqref{scheme_B}, \eqref{scheme_E} and for all $K\in \grid$
\[
D_t \vr _K + \sum_{\sigma \in \facesK} \frac{|\sigma|}{|K|} \Fup (\vrh,\vuh) =0,
\]
\[
\begin{aligned}
D_t (\vrh \vuh)_K + \sum_{\sigma \in \facesK} \frac{|\sigma|}{|K|}
\left( \Fup(\vrh \vuh,\vuh)  
- \mu \frac{\jump{\vuh}}{d_\sigma}
+ \avg{p_h  -\nu  \Divh \vuh} \vc{n}\right)
 =- \frac{1}{|K|}\intK{\vjh \times \vBh^\triangleleft},
\end{aligned}
\]
where $d_\sigma$ is the distance of the centers of the elements $K$ and $L$ for all $\sigma=K|L$. Moreover, the artificial diffusion parameter satisfy 
 \begin{equation*}\label{eps}
\eps >0 \mbox{ if } \gamma \geq 2 \quad \mbox{ and }\quad   \eps \in(0, 2 \gamma-1 -d/3) \mbox{ if } \gamma \in(1,2).
\end{equation*} 
\end{Definition}
\end{tcolorbox}
Here, we recall \eqref{op_diff} and \eqref{num_flux} for the definition of the discrete operators  $\jump{\cdot}$, $\avg{\cdot}$ and the diffusive upwind numerical flux \eqref{num_flux}. Moreover, the discrete divergence operator for the piecewise constant velocity $\vuh \in (Q_h)^d$ is given by
\begin{equation*}
(\Div_h \vuh)_K =
\frac{1}{|K|}\sum_{\sigma\in \facesK}|\sigma| \avg{\vuh} \cdot \vc{n} \quad \forall \; K \in \grid.
\end{equation*}

Analogous to \cite{LiShe_MHD1}, the difference between {\bf Scheme-I} and {\bf Scheme-II} is the discretization of the Navier-Stokes part. Following the discussions in \cite[Remark 4.5]{LiShe_MHD1}, we present the following convergence result.
\begin{Proposition}[Convergence of {\bf Scheme-II}]\label{Th_SA}
 Let $(\vrh, \vuh,\vBh,\vEh)$ be a solution of {\bf Scheme-II} with $\TS \approx h$ and $\gamma > 1$.  Then it converges in the sense of Theorem~\ref{Th2}.
\end{Proposition}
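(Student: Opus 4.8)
The plan is to show that every solution of \textbf{Scheme-II} is a consistent approximation in the sense of Definition~\ref{def_ca} and then to invoke Theorem~\ref{Th2}. The guiding observation is that \textbf{Scheme-II} shares with \textbf{Scheme-I} the \emph{identical} electromagnetic solver \eqref{scheme_B}--\eqref{scheme_E} and the \emph{identical} Lorentz coupling $\vjh \times \vBh^\triangleleft$ with $\vjh = \vEh + \vuh \times \vBh^\triangleleft$; only the Navier--Stokes block is replaced by a finite-volume (MAC-type) discretization on the rectangular/cuboid mesh. I would therefore split the verification of Definition~\ref{def_ca} into an \emph{electromagnetic} part, imported verbatim from the analysis of \textbf{Scheme-I}, and a \emph{hydrodynamic} part, imported from the finite-volume compressible Navier--Stokes analysis of \cite{LiShe_MHD1}.

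For the stability \eqref{es}, I would test the discrete continuity equation by $-\tfrac12|\vuh|^2$ and the discrete momentum equation by $\vuh$, producing the discrete kinetic-plus-internal energy balance of the finite-volume scheme together with the same sign-definite numerical-diffusion terms as on the right-hand side of \eqref{ke}; this balance is exactly the one established in \cite{LiShe_MHD1}. The only term coupling to the magnetic field is the Lorentz work $\intO{(\vjh \times \vBh^\triangleleft)\cdot \vuh}$, and here I would reuse verbatim the two identities from the proof of Theorem~\ref{Tm1}, namely $\intO{(\vjh \times \vBh^\triangleleft)\cdot \vuh} = -\intO{\vjh \cdot (\vjh - \vEh)}$ and $\intO{\vjh\cdot\vEh} = -\tfrac12\intO{(D_t|\vBh|^2 + \TS|D_t\vBh|^2)}$. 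These follow solely from the definition of $\vjh$ and the test choices $\vCh=\vBh$ in \eqref{scheme_B}, $\bfpsi_h=\vEh$ in \eqref{scheme_E}, and are completely insensitive to how the velocity space is discretized. Summing yields \eqref{es}, and, exactly as in Lemma~\ref{ests}, the uniform bounds \eqref{ests1}; in particular the discrete Sobolev inequality for piecewise constants supplies $\norm{\vuh}_{L^2L^6}\lesssim 1$, whence $\norm{\vjh\times\vBh}_{L^2L^1}\lesssim 1$ and $\norm{\vEh}_{L^2L^{3/2}}\lesssim 1$.

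For the consistency I would again argue in the same two blocks. The Maxwell and Amp\`ere relations \eqref{cP3}--\eqref{cP4} are obtained by choosing $\vCh=\PiR\vC$ and $\bfpsi_h=\PiN\bfpsi$ and are \emph{literally} the computation in Theorem~\ref{Tm2}, since the magnetic solver and the interpolants $\PiR,\PiN$ are unchanged and Lemma~\ref{IE} closes the interpolation errors at rate $O(h)$; the exact divergence-free relation \eqref{cP5} follows from item~5 of Lemma~\ref{lem_b}, which only uses \eqref{scheme_B} and $\Divh\Curlh=0$. The continuity and momentum consistency \eqref{cP1}--\eqref{cP2} are supplied by the finite-volume Navier--Stokes consistency analysis of \cite{LiShe_MHD1}, the single new ingredient being the forcing $\vjh\times\vBh^\triangleleft$, which passes to the limit as a bounded measure by virtue of $\norm{\vjh\times\vBh}_{L^2L^1}\lesssim 1$, precisely as for \textbf{Scheme-I}. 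Having checked all items of Definition~\ref{def_ca}, Theorem~\ref{Th2} delivers convergence to a dissipative weak solution and, on its lifespan, to the classical solution.

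I expect the genuine difficulty to lie not in the coupling---which, as stressed, closes structurally and is indifferent to the velocity discretization---but in the hydrodynamic consistency \eqref{cP2} on the piecewise-constant space: one must check that the convective flux $\Fup(\vrh\vuh,\vuh)$, the MAC diffusion $-\mu\jump{\vuh}/d_\sigma$, and the pressure/divergence flux $\avg{p_h-\nu\Divh\vuh}\,\vc{n}$ are each consistent with their continuous counterparts against smooth test functions, and that the reconstructed operators $\Gradh,\Divh$ on the dual mesh remain compatible with $\Grad,\Div$ in the sense required by Theorem~\ref{Th2}. As these are exactly the estimates carried out in \cite[Remark 4.5]{LiShe_MHD1}, the proposition follows by combining them with the electromagnetic estimates imported from \textbf{Scheme-I}.
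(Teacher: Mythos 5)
Your proposal is correct and follows essentially the same route as the paper: the paper likewise treats Scheme-II as Scheme-I with only the Navier--Stokes block replaced, imports the hydrodynamic stability and consistency from \cite[Remark 4.5]{LiShe_MHD1}, reuses the unchanged electromagnetic solver analysis, and concludes via Theorem~\ref{Th2}. Your write-up is in fact more explicit than the paper's (which argues by citation), but there is no substantive difference in the argument.
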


\section{Conclusion}\label{sec_end}
In this paper we establish a general convergence theory for numerical approximations of the compressible MHD system \eqref{pde}.  
%The main tool we utilized are the recent concepts of dissipative weak solutions and consistent approximations. 
We have shown the convergence of consistent approximation towards a dissipative weak solution, gotether with the weak--strong uniqueness priciple, meaning a dissipative weak solution coincides with a classical solution of the same problem as long as they start from the same initial data. 
Interpreting the consistent approximation as the energy stability and consistency of suitable numerical solutions, we have built a generalized Lax equivalence theory:
\begin{figure*}[!h]
\centering
\begin{tikzpicture}[scale=1,    box/.style = {draw, rounded corners,
                 minimum width=22mm, minimum height=5mm, align=center},
            > = {Latex[width=2mm,length=3mm]}]

            \draw[fill=Cgrey,Cgrey] (0,0.8) rectangle (0.985\textwidth,-0.9);
\node[draw,  thick] at (0.08\textwidth, 0)   (n1)  [box]{\hspace{-0.6cm} \bf \begin{tabular}{c}  classical \\ solution \end{tabular} \hspace{-0.75cm} };
\node[draw,  thick] at (0.4\textwidth, 0)   (n2)  [box]{\hspace{-0.3cm} \bf \begin{tabular}{c}  dissipative \\ weak solution \end{tabular}\hspace{-0.2cm}};
\node[draw,  thick] at (0.65\textwidth, 0)   (n3) [box]{\hspace{-0.3cm}  \bf \begin{tabular}{c}  consistent \\ approximation \end{tabular} \hspace{-0.5cm} };
\node[draw,  thick] at (0.88\textwidth, 0)   (n4) [box]{\bf \begin{tabular}{c}  stability +\\ consistency \end{tabular}};
\path[thick,<->]
            (n1)    edge  node[sloped, anchor=center, above] {weak--strong}          (n2)
            (n1)    edge  node[sloped, anchor=center, below] { uniqueness}          (n2);
\path node at (0.52\textwidth,0) {\large $\Longleftrightarrow$};
\path node at (0.772\textwidth,0) {\large $:=$};
\end{tikzpicture}
%\caption{S-limit.}\label{fig-Slimit}
\end{figure*}

Furthermore, in order to illustrate the application of this theory in the convergence analysis of numerical solutions, we proposed two mixed finite volume--finite element method. Our numerical schemes enjoy on the discrete level the energy stability, mass conservation, positivity of density, and the exact divergence free of magnetic field. 
By showing the solutions of the numerical schemes are consistent approximation, we conclude their convergence to a dissipative weak solution and the classical solution. This also indicates the global-in-time existence of dissipative weak solutions with general initial data for any adiabatic exponent $\gamma\in (1,\infty)$.

\clearpage\newpage

{\large{\centerline{\bf Acknowledgements}}}

The research of Y.~Li is supported by National Natural Science Foundation of China under grant No. 12001003. The research of B.~She is supported by Czech Science Foundation, grant No.~21-02411S. The institute of Mathematics of the Czech Academy of Sciences is supported by RVO:67985840.

\bibliographystyle{siamplain}

\end{document}